\newtheorem{thm}{Theorem}
\newtheorem*{proposition}{Proposition}
\newtheorem{lemma}{Lemma}
\theoremstyle{definition}
\theoremstyle{remark}
\begin{document}

\title[]{Towards Optimal Gradient Bounds for \\the Torsion Function in the plane}
\keywords{Torsion, Saint Venant theory, gradient estimate, Brownian motion, Potential Theory, Hermite-Hadamard Inequality, Maximum Shear Stress, Subharmonic Functions}
\subjclass[2010]{28A75, 31A05, 31B05, 35B50, 60J65, 60J70, 74B05.}
\thanks{S.S. is supported by the NSF (DMS-1763179) and the Alfred P. Sloan Foundation.}

\author[]{Jeremy G. Hoskins}
\address{Department of Mathematics, Yale University, New Haven, CT 06511, USA}
\email{jeremy.hoskins@yale.edu}

\author[]{Stefan Steinerberger}
\address{Department of Mathematics, Yale University, New Haven, CT 06511, USA}
\email{stefan.steinerberger@yale.edu}

\begin{abstract}
Let $\Omega \subset \mathbb{R}^2$ be a bounded, convex domain and let $u$ be the solution of $-\Delta u = 1$ vanishing on the boundary $\partial \Omega$. The
estimate
$$ \| \nabla u\|_{L^{\infty}(\Omega)} \leq c |\Omega|^{1/2}$$
is classical. 
 We use the P-functional, the stability theory of the torsion function and Brownian motion to establish the estimate for a universal $c < (2\pi)^{-1/2}$. We also give a numerical construction showing that the optimal constant satisfies $c \geq 0.358$. The problem is important in different settings: (1) as the maximum shear stress in Saint Venant Elasticity Theory, (2) as an optimal control problem for the constrained maximization of the lifetime of Brownian motion started close to the boundary and (3) optimal Hermite-Hadamard inequalities for subharmonic functions on convex domains.
\end{abstract}

\maketitle

\section{Introduction and result}

We study a basic question for elliptic partial differential equations in the plane that arises naturally in a variety of contexts. Let $\Omega \subset \mathbb{R}^2$ be a convex domain and let
\begin{align*}
-\Delta u &= 1 \qquad \mbox{inside}~\Omega\\
u &= 0 \qquad \mbox{on}~\partial \Omega.
\end{align*}
The purpose of this paper is to study a shape optimization problem and to establish bounds on the gradient. More precisely, our main result is as follows.
\begin{thm}[Main Result] Let $\Omega \subset \mathbb{R}^2$ be a convex, bounded domain and let $u$ denote the solution of $-\Delta u = 1$ with Dirichlet boundary conditions. For some universal $c < 1/\sqrt{2\pi} \sim 0.398$,
$$ \| \nabla u \|_{L^{\infty}(\Omega)}  \leq c \cdot |\Omega|^{1/2}$$
and the constant $c$ cannot be replaced by 0.358.
\end{thm}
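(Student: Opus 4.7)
The theorem has two independent halves: an analytic upper bound $c < (2\pi)^{-1/2}$ and an explicit numerical construction showing $c \geq 0.357$. They require entirely different techniques.

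\textbf{Upper bound: the three tools.} The proof should combine the three ingredients announced in the abstract. First, Payne's P-functional $P = |\nabla u|^{2} + u$ (under the normalisation $-\Delta u = 1$) is subharmonic by the Cauchy--Schwarz computation $\Delta P = 2|\nabla^{2}u|^{2} - 1 \geq (\Delta u)^{2} - 1 = 0$, and since $|\nabla u|^{2}$ is itself subharmonic (because $\Delta u$ is constant), $\|\nabla u\|_{\infty}$ is attained at some $x_{0} \in \partial\Omega$. Second, the stability theory for the torsion function --- a quantitative Saint--Venant inequality of Brasco / Fusco--Maggi--Pratelli type --- controls the Fraenkel asymmetry $\alpha(\Omega)$ by the deficit in Talenti's symmetrization bound $\|u\|_{\infty} \leq |\Omega|/(4\pi)$. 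Third, the Brownian motion representation $2u(x) = \mathbb{E}_{x}[\tau_{\Omega}]$ interprets $|\nabla u(x_{0})|$ as the marginal expected exit time as the starting point is pushed an infinitesimal distance into $\Omega$ at $x_{0}$, which combined with slab/chord comparisons at the supporting hyperplane gives $|\nabla u(x_{0})| \leq L(x_{0})/2$ where $L(x_{0})$ is the length of the inward normal chord.

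\textbf{Strict strategy.} The key observation is that on a disk of area $|\Omega|$ the ratio $\|\nabla u\|_{\infty}/|\Omega|^{1/2}$ equals $1/(2\sqrt{\pi}) \approx 0.282$, substantially below the target $(2\pi)^{-1/2} \approx 0.399$, so the disk is not extremal. I would treat two regimes. If $\Omega$ is close to a disk (small Fraenkel asymmetry), the Brownian/chord control forces $\|\nabla u\|_{\infty}$ to be close to the disk value $\approx 0.282\, |\Omega|^{1/2}$, safely below $(2\pi)^{-1/2}|\Omega|^{1/2}$. If $\Omega$ is far from a disk, the quantitative Saint--Venant deficit $|\Omega|/(4\pi) - \|u\|_{\infty}$ is bounded below, and plugging this into the P-functional chain of inequalities $|\nabla u|^{2} + u \leq M^{2}$ produces a definite loss in the ratio. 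Gluing the two regimes yields the universal strict inequality $c < (2\pi)^{-1/2}$, even if the explicit constant produced this way is far from the conjectured optimum.

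\textbf{Lower bound $c \geq 0.357$.} The disk alone realises only $0.282$, so a non-round domain is required. I would run a numerical shape optimisation over a low-dimensional family of convex sets (smoothed polygons, stadia, intersections of two disks, or the output of a shape-gradient iteration), solve $-\Delta u = 1$ to high accuracy with finite elements or a boundary-integral formulation, and evaluate $\|\nabla u\|_{\infty}/|\Omega|^{1/2}$ with rigorous \textit{a posteriori} error bounds to certify the inequality $c \geq 0.357$.

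\textbf{Main obstacle.} The technical heart is the stability analysis. A pointwise strict inequality at each individual convex $\Omega$ is not enough; one needs matched quantitative rates at which large Fraenkel asymmetry forces a loss in the P-functional/symmetrization step, and small Fraenkel asymmetry forces $\|\nabla u\|_{\infty}$ near the disk value $0.282\,|\Omega|^{1/2}$. Calibrating these rates through the Brownian motion representation, where the controlled quantity is the marginal exit time at a boundary point rather than a global lifetime, is where I expect the real technical work to lie.
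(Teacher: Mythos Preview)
Your two-regime split on Fraenkel asymmetry is exactly the architecture the paper uses, and the lower-bound plan (numerical optimisation over convex shapes, boundary-integral solver) matches as well. The gaps are inside the two regimes.

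\textbf{Large asymmetry.} You state that the stability result controls the deficit in $\|u\|_{\infty}\le |\Omega|/(4\pi)$. The Brasco--De Philippis--Velichkov theorem is about the \emph{torsional rigidity} $T(\Omega)=\int_{\Omega}u$, not the sup norm: it gives $T(\Omega^{*})-T(\Omega)\gtrsim \mathcal{A}(\Omega)^{3}$. To connect the P-functional bound $\|\nabla u\|_{\infty}^{2}\le 2\|u\|_{\infty}$ to this, the paper inserts Payne's inequality
\[
\|u\|_{\infty}^{2}\le \frac{1}{2\pi}\int_{\Omega}|\nabla u|^{2}\,dx=\frac{1}{\pi}\int_{\Omega}u\,dx,
\]
which is sharp on the disk and is precisely the missing bridge in your chain. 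Without it you have no link from $\|u\|_{\infty}$ to the quantity whose deficit is controlled.

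\textbf{Small asymmetry.} This is the more serious issue. The slab/chord comparison $|\nabla u(x_{0})|\le L(x_{0})/2$ does \emph{not} force $\|\nabla u\|_{\infty}$ near the disk value $1/(2\sqrt{\pi})\approx 0.282$. On the disk of area $1$ itself, every inward normal chord has length $2/\sqrt{\pi}$, so the chord bound yields only $1/\sqrt{\pi}\approx 0.564$, a factor of two off and well above the target $(2\pi)^{-1/2}\approx 0.398$. The stability statement you invoke---that small $\mathcal{A}(\Omega)$ forces $\|\nabla u\|_{\infty}$ close to the disk value---is in fact posed as an open problem in the paper. What the paper actually does for this regime is a genuinely quantitative Brownian computation: run the motion for a fixed time $T$ between two parallel lines, compute the survivor density explicitly via the heat kernel on $[0,\pi]$, bound the residual lifetime of survivors by the torsion function of a ball of radius $\pi^{-1/2}+c\,\mathcal{A}(\Omega)^{1/2}$ enclosing $\Omega$, maximise over the ball's centre, and then optimise $T$ (they take $T=0.13$) to land at $0.385<(2\pi)^{-1/2}$. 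Your ``marginal exit time plus chord'' heuristic does not get there; the finite-time splitting and the explicit survivor profile are the real content of this half of the proof.
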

Though this is only a very slight improvement over the bound $ 1/\sqrt{2\pi}$ obtained in \cite{hermitemany}, our proof uses very different arguments, is somewhat robust 
and suggestive of future directions. 
One could presumably make the improvement explicit, though it would result in a very
small number (something like $\sim 10^{-10}$ or possibly even less as it depends on other constants for which only non-optimal estimates are available).

Small improvements of this nature are not unusual, we refer to Bonk \cite{bonk} or Bourgain \cite{bourgain} for other examples. As in those cases, the main advance
is not the new constant but the new ideas that allow for an improvement. We also have high-precision numerical results that imply an improved lower bound and possibly point the way to a better understanding
of the optimal shape.

\begin{figure}[h!]
\begin{tikzpicture}
\node at (0,0) {\includegraphics[width = 0.6\textwidth]{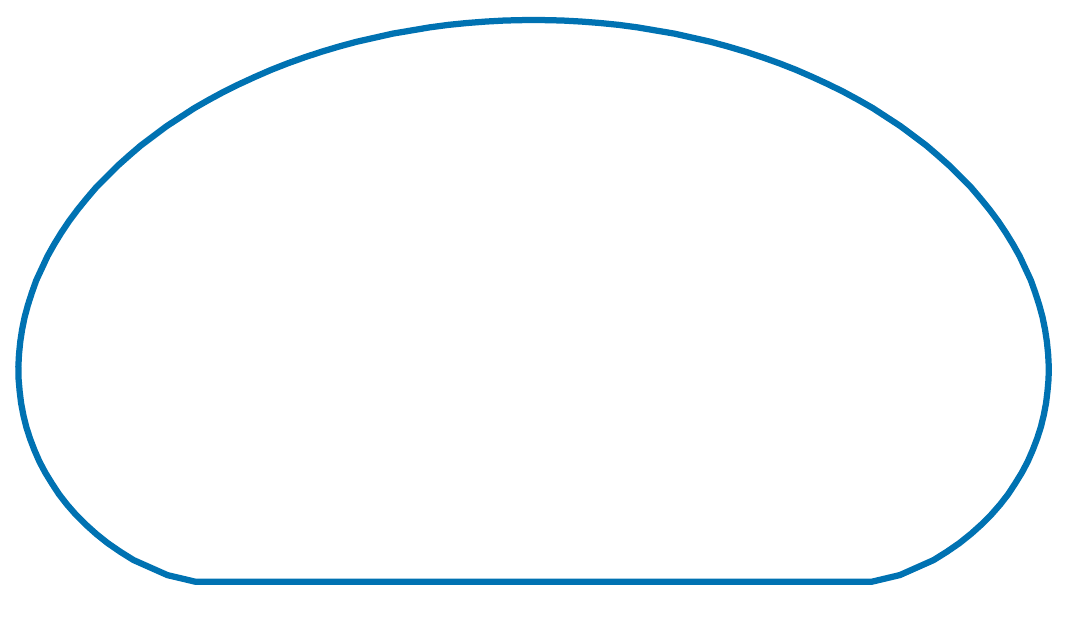}};
\end{tikzpicture}
\caption{The best example that we obtained through an optimization scheme sitting on top of high precision numerics (see \S 4): a domain $\Omega$ for which $ \| \nabla u\|_{L^{\infty}(\Omega)} \sim 0.358126008 | \Omega|^{1/2}$.} 
\end{figure}

We note that, in contrast to many other shape optimization problems, the optimal shape for this functional is quite distinct from a disk. We have a numerical example of a domain
that we believe to be close to optimal and we consider a more precise understanding of extremal domains to be an interesting problem. Is their boundary smooth? Is it true that their
curvature vanishes in exactly one point? 
It is noteworthy that this shape optimization problem appears (somewhat hidden) in a variety of different settings which we now survey. We 
believe it is one of the most fundamental shape optimization problems that is far from being understood.

\subsection{Saint Venant theory.} The function $u$ is also known as the \textit{torsion function} in the Saint Venant theory of elasticity \cite{st} from 1856. In particular, the maximum shear stress is known to occur
on the boundary (a 1930 result of Polya \cite{pol1}) 
$$ \tau = \left\| \frac{\partial u}{\partial n} \right\|_{L^{\infty}(\partial \Omega)} = \| \nabla u \|_{L^{\infty}(\Omega)}.$$
This quantity has been of substantial interest in elasticity theory, see e.g. \cite{bandle, pay0, pay1, payphi, payphi2, paywhe, phi, pol1, polya0, polind, sperb}. A classical inequality is \cite[Eq. 6.12]{sperb}
$$ \tau^2 \leq 2 \|u\|_{L^{\infty}(\Omega)}$$
and many different bounds on the maximum of the function are known (some of these bounds are discussed in \S 2.1). The best bound for arbitrary convex sets in terms of the volume
comes from \cite{hermitemany}
$$ \tau \leq \frac{|\Omega|^{1/2}}{\sqrt{2\pi}}.$$
An earlier result can be found in \cite{jianfeng}. Many more results are available if one is allowed to control more properties of $\Omega$, in particular if one has bounds on the curvature of $\partial \Omega$, see e.g. \cite{pay0, pay1, payphi,payphi2,paywhe,sperb}. As for lower bounds, ellipses are a particularly simple example because the torsion function can be computed in closed form (this was already pointed out in \cite{hermitemany, jianfeng}). A simple computation shows that for any $0 < a < 1$, the function
$$ u(x,y) = 1 - \frac{ax^2 + (1-a)y^2}{2}$$
is the torsion function on the set $\left\{(x,y) \in \mathbb{R}^2: u(x,y) \geq 0\right\}$ which is an ellipse with 
$$ |\Omega| = \frac{2 \pi}{\sqrt{a(1-a)}}.$$
Evaluating $\nabla u$ at $x= \sqrt{2/a}$ shows that
$$ \tau = \| \nabla u\|_{L^{\infty}(\Omega)} \geq \sqrt{2a}$$
and therefore the best constant $c$ in the estimate
$$ \tau = \| \nabla u\|_{L^{\infty}(\Omega)} \leq c |\Omega|^{1/2}$$
has to satisfy
$$ c \geq \max_{0 < a < 1} \frac{\sqrt{2a}}{ \sqrt{ \frac{2\pi}{\sqrt{a(1-a)}}}} = \frac{3^{3/4}}{4 \sqrt{\pi}} \sim 0.321\dots$$
This narrows down the optimal constant to lie in the range $c \in (0.321, 0.398)$. A priori, this looks like a small range already -- we emphasize
that it is in the nature of the problem for the largest derivative to be relatively stable under perturbations of the domain. Indeed, there is still a large variety of different convex bodies for which the largest derivative lies in that range. Our actual understanding of
the problem is still modest.

\subsection{Large Gradients: Historical Remarks.} We are interested in how \textit{big} the largest gradient can be. Saint Venant himself tried to understand the location of the largest gradient
since this is of interest in applications of elasticity theory: where is the maximal stress? Saint Venant 
writes 
\begin{quote}
Les points dangereux sont donc, comme dans l'ellipse et le rectangle, les
points du contour les plus rapproch\'es de l'axe de torsion, ou les extr\'emit\'es
des petits diam\`etre. (Saint Venant, \cite{st})
\end{quote}
More generally, on an ellipse the maximal derivatives are assumed on the short axis, a result considered `startling to many'  according to Thomson \& Tait in 1867 \cite{tt} (Thomson
would later be known as Lord Kelvin) who write in their \textit{Treatise on Natural Philosophy}
\begin{quote}
M. de St. Venant also calls attention to a conclusion from his solutions which to many may be startling, that in the simpler cases the places of greatest distortion are those points of the boundary
which are nearest to the axis [...] and the places of least distortion those farthest from it. (Thomson \& Tait \cite[\S 710]{tt})
\end{quote}
Boussinesq \cite{bous} gave a heuristic explanation in 1871. In 1900, Filon \cite{filon} confirms that the `fail points' (les points dangereux) for ellipses are along the shorter axis.
Around 1920, Griffith \& Sir G. I. Taylor \cite{griff} report on an appartus using soap bubbles to compute torsion.
The fact that the points with the largest gradient lie on the boundary was rigorously proven only in 1930 by Polya \cite{pol1}.

Saint Venant conjectured
that the maximum of the gradient in a convex domain is assumed in the point on the boundary where the largest inscribed circle intersects the boundary. Sweers \cite{stv2} showed
that the conjecture fails for either the set 
$$ \Omega = \left\{ (x_1, x_2) \in \mathbb{R}^2: (|x_1|+1)^2 + x_2^2 < 4, |x_2| \leq 1\right\}$$
or for one of the superlevel sets $\left\{x \in \Omega: u(x) \geq \varepsilon\right\}$ and thus fails in general (see also \cite{stv1, stv2, stv3, stv4}). However, the statement is known to hold under some additional assumptions (see e.g. Kawohl \cite{kawohl, kawohl2}).

\begin{center}
\begin{figure}[h!]
\begin{tikzpicture}[scale=1.8]
\draw [dashed](0,0) circle (1cm);
\draw [very thick] (0.73, -1) -- (-0.73, -1);
\draw [very thick] (0.73, 1) -- (-0.73, 1);
\draw [very thick] (-0.73, -1) to[out=117, in=243] (-0.73, 1);
\draw [very thick] (0.73, -1) to[out=63, in=297] (0.73, 1);
\end{tikzpicture}
\caption{The `barrel' domain of Sweers \cite{stv2}.}
\end{figure}
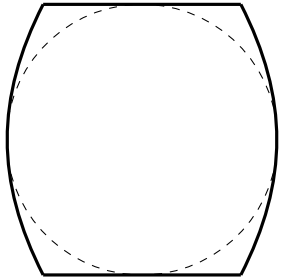
\end{center}

\subsection{A Benchmark PDE} The equation $-\Delta u = 1$ with Dirichlet boundary conditions inside convex domains has for a long time been a `benchmark PDE' for which
new types of techniques and results are being developed: in a certain sense, it is the simplest elliptic partial differential equation after $\Delta u = 0$. An example is the seminal result
of Makar-Limanov \cite{makar} who showed that solutions of $-\Delta u = 1$ in convex domains have the property that $\sqrt{u}$ is concave. Many more results have since
been established, we refer to the survey of Keady \& McNabb \cite{keady}. 
Results with a similar degree of precision are really only available for the first eigenfunction of the Laplacian. A fundamental result
by Brascamp \& Lieb \cite{brascamp} for the ground state of the Laplacian $-\Delta u = \lambda_1 u$ inside convex domains in the plane is convexity of the level set. Problems of this nature are of continued interest \cite{arango, finn, henriot, kawohl0, ma, payshape, sperb}.
We also emphasize that there are very precise results obtained for the structure of the first Laplacian eigenfunction by Grieser and Jerison \cite{grieser, grieser2, jerison} with
exciting recent developments by T. Beck \cite{beck1, beck2, beck3}. The usual question is: how much of the insight gained for these types of special solutions can be carried
over to more general solutions? 
We also mention the larger field of Shape Optimization (representative textbooks being Baernstein \cite{baern}, Lieb \& Loss \cite{lieb}, Henrot \cite{henriot}, Polya \& Szeg\H{o} \cite{polya})
concerned with the interplay of solutions of partial differential equations (or functionals of the solutions) with the geometry of the underlying domain. Both $-\Delta u = \lambda_1 u$
and $-\Delta u = 1$ have been actively investigated from that perspective and we see our contribution firmly aligned with this line of inquiry. In particular, our approach is sufficiently
robust to yield nontrivial results for larger families of PDE's. However, since we have not fully understood $-\Delta u = 1$, we have not pursued this further at this time.

\subsection{Expected Lifetime of Brownian motion.} The solution of $-\Delta u = 1$ with Dirichlet boundary conditions describes (up to constants depending on normalization) the expected
lifetime of Brownian motion. More presicely, $u(x)$ gives the expected lifetime of Brownian motion started in $x$ before hitting the boundary $\partial \Omega$. We refer to \cite{ban, ban2, ban3, ban4}
and references therein. There are many open problems. It is known, for example, that for simply connected $\Omega \subset \mathbb{R}^2$
$$ \frac{\mbox{inrad}(\Omega)^2}{2}  \leq \mbox{maximum expected lifetime} \leq c\cdot \mbox{inrad}(\Omega)^2.$$
Here, the constant $c$ is known to be 1 in convex domains \cite{sperb}, the extreme case is that of an infinite strip. Such results are only possible due to the potential-theoretic rigidity of
two dimensions, we refer to \cite{georgiev, lieb0, lierl, manas} for substitute results in higher dimensions.
 A result of a similar flavor is the following: among all convex sets with fixed
volume $|\Omega|$, the disk maximizes both the average expected lifetime (a 1948 result of Polya \cite{polya0}) as well as the maximum expected lifetime (this follows essentially from a rearrangement principle of Talenti \cite{talenti}).
For this type of problem, we also refer to recent work of Hamel \& Russ \cite{hamel} (see also J. Lu and the second author \cite{jianfeng2}).

\begin{center}
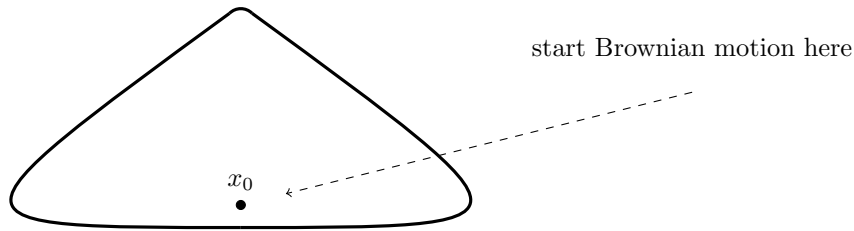
\begin{figure}[h!]
\begin{tikzpicture}[scale=3,rotate=90]
  \draw[very thick] (0,0) .. controls (0,1.35) .. (0.95,0.05);
    \draw[very thick] (0,0) .. controls (0,-1.35) .. (0.95,-0.05);
    \draw[very thick] (0.95, 0.05) to[out=315, in=45] (0.95, -0.05);
    \filldraw (0.1,0) circle (0.02cm);
    \node at (0.2,0) {$x_0$};
    \node at (0.8,-2) {start Brownian motion here};
    \draw [dashed,->] (0.6, -2) -- (0.15, -0.2);
\end{tikzpicture}
\caption{A domain $\Omega$ satisfying $\|\nabla u\|_{L^{\infty}(\Omega)} \sim 0.34|\Omega|^{1/2}$. Brownian motion started in the point $x_0$ close to the boundary has a (relatively) large expected lifetime compared to other domains.}
\end{figure}
\end{center}

The problem of sharp gradient estimates for the torsion function is equivalent to a natural and interesting problem in probability theory. As mentioned above, the shape maximizing the expected lifetime of Brownian motion among domains with fixed volume is
the disk (and, unsurprisingly, we start Brownian motion in the center of the disk). 
\begin{quote}
\textbf{Problem.} Among all convex domains of fixed volume $\Omega$, which one maximizes the expected lifetime of Brownian motion that starts in a point that is within $\varepsilon$ distance of the boundary?
\end{quote}
The Blaschke selection theorem guarantees the existence of these
extremal domains but it is not at all clear what sort of regularity properties they have. This problem depends on the size of $\varepsilon$, our paper is only concerned with the case $\varepsilon \rightarrow 0$. It is not too surprising that the expected lifetime is going to decay: starting close to the boundary makes it exceedingly likely to hit the boundary rather quickly. 
Nonetheless, there will always be Brownian paths that survive the initial dangerous phase and then explore the domain. Which shape guarantees the longest lifetime? It is intuitively clear that since
we start close to the boundary, that part of the boundary should be as flat as possible within the confines of being a convex domain of a fixed volume. This makes it seem natural to assume that the curvature of the boundary of the optimal domain vanishes in exactly one point. Our
numerical construction of a lower bound suggests certain shapes that might be close to extremal, we believe this to be an interesting open problem.

\begin{center}
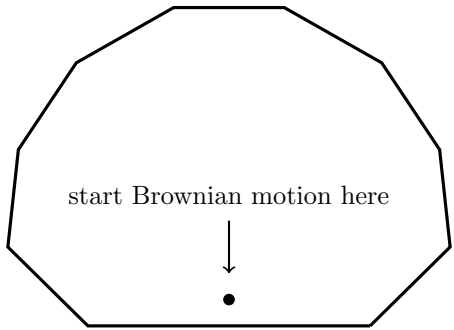
\begin{figure}[h!]
\begin{tikzpicture}[scale=3.5]
  \draw[very thick] (0.536, 0) -- (0.84, 0.3) -- (0.8, 0.67) -- (0.58, 1) -- (0.21, 1.21) -- (-0.21, 1.21) -- (-0.58, 1) -- (-0.8, 0.67) -- (-0.84, 0.3) -- (-0.536,0) -- (0.536,0);
  \filldraw (0,0.1) circle (0.02cm);
  \node at (0,0.5) {start Brownian motion here};
\draw [thick,->] (0,0.4) -- (0,0.2);
\end{tikzpicture}
\caption{A domain for which $\|\nabla u\|_{L^{\infty}(\Omega)} \sim 0.357|\Omega|^{1/2}$. Brownian motion started in the point $x_0$ close to the boundary has a (relatively) large expected lifetime compared to other domains.}
\end{figure}
\end{center}

We observe that these two examples (Fig. 3 and Fig. 4) have quite different shapes but have maximal gradients of a fairly similar size. As mentioned above, the problem comes with a great deal of stability which is echoed in these two numbers being rather similar. In particular, the difference of $1/50$ in these maximal gradients is actually, considering the underlying stability, a big difference. This further illustrates the difficulty of the problem: understanding the extreme domains for a quantity that is very stable.

\subsection{Sharp Hermite-Hadamard inequalities.} The Hermite-Hadamard inequality \cite{hada, hermite} is a (very) elementary fact: if $f:[a,b] \rightarrow \mathbb{R}$ is a convex function, then
$$ \frac{1}{b-a} \int_{a}^{b}{f(x) dx} \leq \frac{f(a) + f(b)}{2}.$$
There have been a very large number of variations on this inequality, we refer to \cite{bes, cal1, cal2, chen, square, dragomirk, nicu2, nicu,choquet, past} and the textbook \cite{drago}. Somewhat surprisingly, given
the large number of results and extensions, until recently there has been relatively little work on the general higher-dimensional case.
The second author \cite{stein1} proved that if $\Omega \subset \mathbb{R}^n$ is bounded and convex and $f:\Omega \rightarrow \mathbb{R}$ is convex and positive on $\partial \Omega$, then
$$ \frac{1}{|\Omega|} \int_{\Omega}{ f(x) dx} \leq \frac{c_n}{|\partial \Omega|} \int_{\partial \Omega}{ f(x) d\sigma},$$
where $c_n$ is a universal constant depending only on the dimension. This was improved to $n-1 \leq c_n \lesssim n^{3/2}$ in \cite{hermitemany}. Recently, Larson \cite{larson} established the sharp constant to be
$$ c_n = n.$$
The second author \cite{stein1} also showed that, under the same assumptions on $\Omega$ and $f$, we also have
$$  \int_{\Omega}{ f(x) dx} \leq c_n |\Omega|^{1/n} \int_{\partial \Omega}{ f(x) d\sigma}.$$
This was then improved by Jianfeng Lu and the second author \cite{jianfeng} who proved that $c_n \leq 1$ in all dimensions and that the inequality holds for the larger family of subharmonic functions $\Delta f \geq 0$ (every convex function is subharmonic). A slightly better estimate on the constant was given in \cite{hermitemany} which proved that in two dimensions $c_2 \leq 1/\sqrt{2\pi} \sim 0.39$. 
A characterization of the optimal constant as the largest gradient term of the torsion function is due to Niculescu \& Persson \cite{choquet}. The argument is as follows: since $f$ is subharmonic, i.e. $\Delta f \geq 0$, and $u \geq 0$ vanishes on the boundary, integration by parts shows
\begin{align*}
\int_{\Omega}{ f dx} = \int_{\Omega}{ f (-\Delta u) dx} &= \int_{\partial \Omega}{ \frac{\partial u}{\partial \nu} f d\sigma} - \int_{\Omega}{(\Delta f) u dx} \\
&\leq  \int_{\partial \Omega}{ \frac{\partial u}{ \partial \nu} f d\sigma} \\
&\leq \max_{x \in \partial \Omega}{  \frac{\partial u}{\partial \nu}(x)} \int_{\partial \Omega} { f d\sigma},
\end{align*}
where $\nu$ is the inward pointing normal vector and we used the condition $f \big|_{\partial \Omega} \geq 0$ in the last step. Moreover, by letting $f$ be
the harmonic extension of a characteristic function in a small part of the domain where $\partial u/\partial \nu$ is close to maximal, we see that this constant is optimal.
This characterization, combined with our result, implies the following improved Hermite-Hadamard inequality.

\begin{thm} Let $\Omega \subset \mathbb{R}^2$ be a convex domain and let $f: \Omega \rightarrow \mathbb{R}$ be a subharmonic function, $\Delta f \geq 0$, assuming positive values on $\partial \Omega$. Then, for some universal constant $c < 1/\sqrt{2\pi} \sim 0.398\dots$,
$$ \int_{\Omega}{ f(x) dx} \leq c \cdot |\Omega|^{1/2} \int_{\partial \Omega}{f(x) d\sigma}$$
and the constant $c$ cannot be replaced by $0.358$.
\end{thm}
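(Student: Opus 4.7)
The proof is essentially packaged inside the excerpt itself via the Niculescu--Persson characterization, so the plan is to record that reduction carefully and then feed in Theorem 1 for the upper bound and the domain of Figure 1 for the lower bound.

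\textbf{Upper bound.} The plan is to reuse verbatim the integration-by-parts computation already displayed before the theorem statement. Let $u$ denote the torsion function on $\Omega$. Since $\Delta f \ge 0$, $u\ge 0$, and $u=0$ on $\partial\Omega$, Green's identity gives
\begin{equation*}
\int_{\Omega} f\,dx \;=\; \int_{\Omega} f(-\Delta u)\,dx \;=\; \int_{\partial\Omega} \frac{\partial u}{\partial \nu}\,f\,d\sigma \;-\; \int_{\Omega} (\Delta f)\,u\,dx.
\end{equation*}
The second term is nonpositive because $u\ge 0$ and $\Delta f\ge 0$, and in the first term we use $f\ge 0$ on $\partial\Omega$ to pull out the pointwise maximum of $\partial u/\partial\nu$. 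Polya's identification $\|\partial u/\partial\nu\|_{L^\infty(\partial\Omega)} = \|\nabla u\|_{L^\infty(\Omega)}$ (recorded in \S 1.1) then reduces matters to
\begin{equation*}
\int_{\Omega} f\,dx \;\le\; \|\nabla u\|_{L^\infty(\Omega)} \int_{\partial\Omega} f\,d\sigma.
\end{equation*}
Plugging in Theorem 1 yields the strict inequality $c < 1/\sqrt{2\pi}$.

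\textbf{Lower bound.} Here the plan is to show that for the specific domain $\Omega_0$ depicted in Figure 1 one can approximately saturate the gradient estimate with an admissible subharmonic test function, so that the constant in the Hermite--Hadamard inequality is at least as large as the normalized maximal gradient on $\Omega_0$, namely $0.357$. Let $x_0\in \partial\Omega_0$ be the point where $|\nabla u|$ is maximal (the marked point in Figure 1). For a small arc $A_\varepsilon\subset \partial\Omega_0$ centered at $x_0$ of $\sigma$-measure $\varepsilon$, let $f_\varepsilon$ be the harmonic extension into $\Omega_0$ of $\mathbf 1_{A_\varepsilon}$; this is harmonic (hence subharmonic) and nonnegative on $\partial\Omega_0$, so it is admissible. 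The same integration by parts, now read as an equality up to the error term $\int (\Delta f_\varepsilon)u\,dx = 0$, gives
\begin{equation*}
\frac{\int_{\Omega_0} f_\varepsilon\,dx}{\int_{\partial\Omega_0} f_\varepsilon\,d\sigma} \;=\; \frac{1}{\varepsilon}\int_{A_\varepsilon} \frac{\partial u}{\partial \nu}\,d\sigma \;\xrightarrow[\varepsilon\to 0]{}\; \frac{\partial u}{\partial \nu}(x_0) \;=\; \|\nabla u\|_{L^\infty(\Omega_0)}.
\end{equation*}
Dividing by $|\Omega_0|^{1/2}$ and invoking the numerical bound $\|\nabla u\|_{L^\infty(\Omega_0)} \ge 0.357\,|\Omega_0|^{1/2}$ (from the construction accompanying Theorem 1) shows that the constant in Theorem 2 cannot be taken below $0.357$.

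\textbf{Main obstacle.} There is no real PDE obstacle left: the upper bound is a one-line consequence of Theorem 1 together with the Niculescu--Persson identity, and the lower bound only repackages the Figure 1 construction. The one technical point that needs care is justifying the harmonic-extension step for the sharpness claim on a merely polygonal/piecewise-smooth boundary like the one in Figure 1, where $\partial u/\partial \nu$ may only be continuous in the interior of each boundary arc. This is handled by choosing $x_0$ to lie in the interior of a smooth arc of $\partial\Omega_0$, so that the Lebesgue-differentiation limit above is valid, and by approximating the characteristic function $\mathbf 1_{A_\varepsilon}$ on $\partial\Omega_0$ by continuous nonnegative boundary data if one prefers classical (rather than Perron) harmonic extensions; either route leaves the limiting ratio unchanged.
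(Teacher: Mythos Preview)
Your proposal is correct and follows essentially the same approach as the paper: the upper bound comes from the Niculescu--Persson integration-by-parts identity combined with Theorem~1, and the lower bound comes from testing with harmonic extensions of characteristic functions concentrated near the point of maximal normal derivative on the numerically constructed domain of Figure~1. The paper records both of these ingredients immediately before stating Theorem~2 and then simply says ``This characterization, combined with our result, implies the following improved Hermite--Hadamard inequality''; your write-up spells out the sharpness argument in slightly more detail than the paper does, but the route is the same.
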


We emphasize that this problem is completely equivalent to the other two problems: optimal gradient estimates for the torsion function and the largest lifetime of Brownian time constrained to starting close to the boundary. We do not have any knowledge about the extremal domain for this Hermite-Hadamard inequality. We do not know anything about the regularity of the boundary and we know nothing about the behavior of the curvature. However, courtesy of the underlying intuition coming from maximizing constrained Brownian motion, we have numerical examples of domains that we believe to be close to optimal.

\subsection{Our Approach.} We conclude by explaining our approach to prove Theorem 1. It is based on the notion of Fraenkel asymmetry: for any domain $\Omega \subset \mathbb{R}^n$, its Fraenkel asymmetry $\mathcal{A}(\Omega)$ is defined by
$$ \mathcal{A}(\Omega) = \inf_{|B| = |\Omega|}\frac{ \left| B \Delta \Omega \right|}{|\Omega|},$$
where $B$ ranges over all balls in $\mathbb{R}^n$ that have the same volume as $\Omega$ and 
$A \Delta B = (A \setminus B) \cup (B \setminus A)$ is the symmetric difference of two sets. The Fraenkel
asymmetry satisfies $0 \leq \mathcal{A}(\Omega) \leq 2$ and is a quantitative measure of how close a set
is to a ball.
Combining the P-functional of Sperb \cite{sperb}, an old inequality of Larry Payne \cite{paynegood} and a recent
quantitative Saint Venant theorem due to Brasco, De Philippis and Velichkov \cite{brasco}, we can establish that
for any convex $\Omega \subset \mathbb{R}^2$ with area $|\Omega|=1$, the solution of $-\Delta u = 1$ with Dirichlet
boundary conditions, satisfies, for some universal constant $\tau>0$, 
$$ \|\nabla u\|_{L^{\infty}(\Omega)} \leq \frac{1}{\sqrt{2\pi}} - \tau \cdot \mathcal{A}(\Omega)^2.$$
This etablishes a uniform improvement for all domains satisfying $\mathcal{A}(\Omega) \in (\varepsilon_0, 2)$ for any $\varepsilon_0 > 0$.
It remains to understand the case where $\Omega$ is very close to the disk in the sense of $\mathcal{A}(\Omega) \leq \varepsilon_0$.
Here we will establish the following stability result.

\begin{proposition} There exists $\varepsilon_0 > 0$ such that if $\Omega \subset \mathbb{R}^2$ is convex, $|\Omega| = 1$, $\mathcal{A}(\Omega) \leq \varepsilon_0$
and $u$ solves $-\Delta u = 1$ with Dirichlet boundary conditions set to 0, then
$$ \| \nabla u\|_{L^{\infty}(\Omega)} \leq 0.39 < \frac{1}{\sqrt{2\pi}}.$$
\end{proposition}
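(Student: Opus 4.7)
The plan is a compactness-contradiction argument built around the fact that on the unit-area disk $B$ of radius $r=1/\sqrt{\pi}$ the torsion function is $u_B(x)=(r^2-|x|^2)/4$, so $\|\nabla u_B\|_{L^\infty(B)}=r/2=1/(2\sqrt{\pi})\sim 0.282$, comfortably below $0.39$. Suppose the Proposition fails. Then there is a sequence $\Omega_k$ of convex, unit-area domains with $\mathcal{A}(\Omega_k)\to 0$ and $\|\nabla u_k\|_{L^\infty(\Omega_k)}\geq 0.39$. Since convex sets of fixed area with small Fraenkel asymmetry are Hausdorff-close to a ball up to translation (a standard fact in the quantitative isoperimetric theory of convex bodies), we may extract a subsequence with $\Omega_k\to B$ in Hausdorff distance after suitable centering.

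Next I pass the PDE to the limit. Hausdorff convergence yields inclusions $(1-\delta_k)B\subset \Omega_k\subset (1+\delta_k)B$ with $\delta_k\to 0$, so the maximum principle sandwiches $u_k$ between the torsion functions of the inner and outer disks and forces $u_k \to u_B$ uniformly on $\mathbb{R}^2$ (with the zero extension) and in $H^1_0$. By Polya's theorem, $\|\nabla u_k\|_{L^\infty}$ is attained on $\partial\Omega_k$. To control it I invoke the Payne--Sperb $P$-function $P_k=|\nabla u_k|^2+u_k$, which is subharmonic on convex $\Omega_k$ and satisfies $P_k|_{\partial\Omega_k}=|\nabla u_k|^2$. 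Hence $\|\nabla u_k\|_{L^\infty}^2=\max_{\Omega_k}P_k$. On the disk one computes $P_B\equiv 1/(4\pi)$, and the objective reduces to showing $\limsup_k \max P_k\leq 1/(4\pi)$; since $0.39^2\approx 0.152 > 1/(4\pi)\approx 0.0796$, this contradicts the standing assumption for large $k$.

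The principal obstacle is gradient convergence up to the boundary, because convex domains that are Hausdorff-close to a disk need not be $C^1$ uniformly (polygons with many short sides are Hausdorff-close to a circle). I would proceed by choosing boundary points $x_k\in\partial\Omega_k$ where $|\nabla u_k|$ is maximal, extracting a subsequence so that $x_k\to x^*\in\partial B$, and then noting that, near $x^*$, $\Omega_k$ is trapped between two tangent disks coming from the sandwich $(1-\delta_k)B\subset\Omega_k\subset(1+\delta_k)B$. A comparison with the radial torsion functions on these tangent disks, together with the Hopf boundary lemma, pins down the limit of $|\nabla u_k(x_k)|$ to $|\nabla u_B(x^*)|=1/(2\sqrt{\pi})$.

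An equivalent strategy that avoids boundary regularity entirely is to apply the mean-value inequality for the subharmonic $P_k$ to a small disk inside $\Omega_k$ that is tangent to $\partial\Omega_k$ at the maximizing point $x_k$; combined with the uniform interior convergence $P_k\to P_B\equiv 1/(4\pi)$ away from $\partial B$, this forces $\limsup_k P_k(x_k)\leq 1/(4\pi)$ and completes the contradiction. The quantitative Brasco--De Philippis--Velichkov theorem is not strictly needed at this stage (qualitative compactness is enough), but if one wants $\varepsilon_0$ explicit it can be invoked to quantify the Hausdorff closeness to $B$ in terms of $\mathcal{A}(\Omega_k)$.
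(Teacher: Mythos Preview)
Your strategy is genuinely different from the paper's. The paper does \emph{not} argue by compactness; it gives a direct quantitative estimate on the expected lifetime of Brownian motion started at distance $\varepsilon$ from a boundary point. The time axis is split at a free parameter $T$: on $[0,T]$ the one-dimensional reflection principle and the heat kernel on an interval give both the survival probability and the spatial distribution of survivors, and for $t>T$ the remaining lifetime is bounded by the explicit torsion function on a disk of radius $1/\sqrt{\pi}+O(\mathcal A(\Omega)^{1/2})$ that contains $\Omega$ (this is where the hypothesis $\mathcal A(\Omega)\le\varepsilon_0$ enters). Optimizing at $T=0.13$ yields the bound $0.385<0.39$. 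That argument buys an explicit constant and is robust to small Fraenkel asymmetry by continuity of the closed-form expressions.

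Your argument, by contrast, hinges entirely on the claim $\limsup_k\|\nabla u_k\|_{L^\infty}\le 1/(2\sqrt\pi)$ for convex $\Omega_k\to B$ in Hausdorff distance, and this step is not justified by either of your sketches. In route (b) the mean-value inequality for the subharmonic function $P_k=|\nabla u_k|^2+u_k$ bounds $P_k$ at the \emph{center} of a disk, not at the tangency point $x_k\in\partial\Omega_k$; since $P_k$ is subharmonic, $\max_{\Omega_k}P_k$ is achieved on $\partial\Omega_k$ and equals $\|\nabla u_k\|_{L^\infty}^2$ exactly, so ``bounding $\max P_k$'' is the same problem you started with and interior convergence of $P_k$ gives no leverage. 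In route (a) the sandwich $(1-\delta_k)B\subset\Omega_k\subset(1+\delta_k)B$ does not produce an \emph{outer} disk tangent to $\partial\Omega_k$ at $x_k$ with radius tending to $1/\sqrt\pi$: in local coordinates with $x_k$ at the origin and supporting line $\{y=0\}$, the lens $\{y>0\}\cap(1+\delta_k)B$ is not contained in any such tangent disk (points with $y\to0^+$ and $x\neq0$ force the tangent radius to infinity), so a naive Hopf comparison is unavailable. Crucially, the only bound you \emph{do} get from the sandwich is Sperb's $\|\nabla u_k\|_{L^\infty}^2\le 2\|u_k\|_{L^\infty}$ together with $\|u_k\|_{L^\infty}\to 1/(4\pi)$, which yields $\limsup_k\|\nabla u_k\|_{L^\infty}\le 1/\sqrt{2\pi}\approx 0.3989$ --- precisely the threshold the Proposition must beat, not $0.39$. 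Closing that last gap is the whole content of the Proposition, and it needs a genuine new ingredient (the paper's Brownian estimate, or a real boundary-gradient stability theorem); your sketch does not supply one.
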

This raises several interesting open problems: is there a form of stability theory for the maximal gradient of the torsion function in convex domains?
 More precisely, one could ask whether for all convex $\Omega$
with $|\Omega|=1$, the solution of $-\Delta u = 1$ with Dirichlet boundary conditions satisfies
$$ \| \nabla u\|_{L^{\infty}(\Omega)} \leq \frac{1}{2\sqrt{\pi}} + c\cdot \mathcal{A}(\Omega)^{\alpha}$$
for some fixed constants $\alpha$ and $c$. One might expect this quantity to be quite stable. Indeed,
one of the reasons why understanding extremal domains for this problem might be difficult is precisely this form of stability
that is also reflected in our numerical examples.
 A more general question would be whether, for any two convex domains $\Omega_1, \Omega_2 \in \mathbb{R}^2$ having the same
measure, the solutions $u_1, u_2$ satisfy a general stability property along the lines of
$$ \left| \| \nabla u_1\|_{L^{\infty}(\Omega_1)} - \| \nabla u_2\|_{L^{\infty}(\Omega_2)} \right| \leq c\cdot \left( \frac{| (\Omega_1 \setminus \Omega_2) \cup (\Omega_2 \setminus \Omega_1)|}{|\Omega_1|}\right)^{\beta}.$$

\section{Observations and Remarks} We first summarize the existing arguments surrounding the P-functional approach, use them to prove the existence of an extremizing domain, discuss a seemingly novel isoperimetric inequality and then
proceed to establish the main result in \S 3.

\subsection{Summary of the existing argument.}

We will present the relevant material for the equation $-\Delta u = 2$ with Dirichlet boundary conditions inside a convex domain. Our object of
interest, gradient bounds for solutions of $-\Delta u = 1$, then follow from a rescaling by a factor of $2$. It is slightly more convenient to work with $-\Delta u = 2$
since that is the scaling corresponding to the expected lifetime of Brownian motion.
The P-functional associated with solutions of $-\Delta u =2$ is 
$$ P(u) = |\nabla u|^2 + 4u.$$
The important property is that $P(u)$ assumes its maximum at the unique global maximum of $u$. Moreover, it is known that $|\nabla u|$ assumes its largest value on the boundary (see \cite{pol1, sperb}). These two properties combined imply that
$$ \| \nabla u\|_{L^{\infty}(\Omega)}^2 \leq 4\|u\|^{}_{L^{\infty}(\Omega)}.$$
We repeat an argument from the book of Sperb \cite{sperb}. We have, for all $x \in \Omega$,
$$ |\nabla u(x)| \leq 2 \sqrt{\|u\|_{L^{\infty}(\Omega)} - u(x)}.$$
Moreover, the gradient is larger than any directional derivative. Integrating this identity along
the shortest line connecting the point $x_0$ at which $u$ assumes its maximum to the closest
point on the boundary shows that
$$ \|u\|_{L^{\infty}(\Omega)} \leq d(x_0, \partial \Omega)^2 \leq  \mbox{inrad}(\Omega)^2.$$
However, one does not necessarily need to integrate along a line leading to the boundary; integrating over lines
leading to an arbitrary point in the domain shows that
$$ \|u\|_{L^{\infty}(\Omega)}  \leq \|x-x_0\|^2 + u(x).$$
Integrating over the entire domain $\Omega$, which we assume to have volume 1, leads to
$$  \|u\|_{L^{\infty}(\Omega)}  \leq \int_{\Omega}{ \|x-x_0\|^2 dx} + \int_{\Omega}{u(x) dx}.$$
This is a known result and can be found in \cite[Eq. (6.14)]{sperb}. 
The quantity
$$ \int_{\Omega}{u(x) dx} \qquad \mbox{is also known as the torsional rigidity}$$
and is a well-studied object. Saint Venant \cite{st} conjectured in the 1850s that among all domains of fixed area, the torsional rigidity is
maximized by the circle (now known as Saint Venant's Theorem). The first rigorous proof seems to have been given in a 1948 paper of
Polya \cite{polya0}. Davenport gave another proof (in \cite{polya}), a third proof is due to Makai \cite{makai}.
Nowadays, it is often considered a consequence of Talenti's rearrangement principle \cite{talenti}. For the purpose
of obtaining an upper bound, it thus suffices to assume $\Omega$ is a disk of radius $\rho = 1/\sqrt{\pi}$ in which case
 $$ \mbox{the torsion function is explicit:} \qquad  \frac{1}{2\pi} - \frac{1}{2}\|x\|^2$$
and thus
$$\int_{\Omega}{u(x) dx} \leq \int_{0}^{1/\sqrt{\pi}}{ \left( \frac{1}{2\pi} - \frac{r^2}{2} \right) 2\pi r dr} = \frac{1}{4\pi}.$$
Moreover, as implied by Talenti's rearrangement principle \cite{talenti}, the maximum value of $u$ also increases under symmetrization and thus,
for all domains of area $|\Omega|=1$, we have
$$ \|u\|_{L^{\infty}(\Omega)} \leq \frac{1}{2\pi}.$$

Summarizing these ideas, we see that if $\Omega \subset \mathbb{R}^2$ is a convex set of unit area and $-\Delta u = 2$ in $\Omega$ with Dirichlet boundary conditions, then
\begin{align*}
\| \nabla u\|_{L^{\infty}(\Omega)} &\leq 2\|u\|^{1/2}_{L^{\infty}(\Omega)} \leq \frac{\sqrt{2}}{\sqrt{\pi}}\\
\| \nabla u\|_{L^{\infty}(\Omega)} &\leq 2~\mbox{inrad}(\Omega)\\
\| \nabla u\|_{L^{\infty}(\Omega)}^2 &\leq \frac{1}{\pi} + 4\int_{\Omega}{ \|x-x_0\|^2 dx}.
\end{align*}
The first bound yields, after scaling by a factor of 2, the upper bound of $1/\sqrt{2\pi} \sim 0.39\dots$ that we wish to improve upon.
The second inequality would imply an improvement as soon as $\mbox{inrad}(\Omega) < 1/\sqrt{2\pi}$ (however, $|\Omega|=1$ only yields
$\mbox{inrad}(\Omega) < 1/\sqrt{\pi}$). The third
estimate, coupled with an isoperimetric estimate on the polar momentum, does not yield any improvement either.

 \subsection{An Isoperimetric Principle for Polar Momentum} 
 In the purpose of investigating various ways of utilizing $P-$functional arguments, we did come across an amusing isoperimetric principle for the polar momentum that
 we could not find in the literature.
\begin{lemma} Let $0 < \rho \leq 1/\sqrt{\pi}$. Among all convex sets $\Omega \subset \mathbb{R}^2$ with area 1 that contain a disk of radius $\rho$ around the origin, the functional
$$ J(\Omega) = \int_{\Omega}{ \|x\|^2 dx}$$
is maximized if $\Omega$ is the convex hull of a disk of radius $\rho$ and a point (the point is uniquely defined by the volume constraint $|\Omega| = 1$ up to rotation invariance).
\end{lemma}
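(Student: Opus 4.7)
The plan is three-fold: establish existence of a maximizer $\Omega^{*}$ via Blaschke selection, derive the structure of $\partial \Omega^{*}$ from first-order optimality conditions, and conclude by a merging argument that the only extremal configuration is the convex hull of $B(0,\rho)$ with a single external point. For existence, the class of convex $\Omega$ with $|\Omega|=1$ and $B(0,\rho) \subset \Omega$ is compact in the Hausdorff metric modulo rotation about the origin: any point of $\Omega$ at distance $R$ from the origin forces $\Omega$ to contain an ice-cream cone whose area grows with $R$, so $R$ is bounded by the volume constraint, yielding a uniform diameter bound. Since $J$ is Hausdorff-continuous, Blaschke selection yields a maximizer.

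To identify $\Omega^{*}$, we write it in polar coordinates through its radial function $r(\theta) \geq \rho$, so that
\[
|\Omega| = \tfrac{1}{2}\int_{0}^{2\pi} r^{2}\, d\theta, \qquad J(\Omega) = \tfrac{1}{4}\int_{0}^{2\pi} r^{4}\, d\theta,
\]
and convexity of $\Omega$ is encoded by the distributional inequality $r^{2} + 2(r')^{2} - r r'' \geq 0$. Lagrange multipliers applied to maximizing $\int r^{4}$ subject to $\int r^{2}$ fixed, $r \geq \rho$, and the convexity inequality show that at almost every $\theta$ one of two constraints is active: either $r(\theta) = \rho$ (the boundary hugs the inscribed disk), or the convexity inequality saturates (the boundary is locally a straight segment). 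Smoothness at the transitions forces each free segment to be tangent to $\partial B(0,\rho)$, so $\partial \Omega^{*}$ is an alternating sequence of arcs of $\partial B(0,\rho)$ and tangent line segments, meeting at finitely many external vertices $p_{1}, \dots, p_{k}$.

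It remains to show $k = 1$. Assuming $k \geq 2$, we construct an area-preserving deformation consolidating two vertices: retract $p_{1}$ radially inward while pushing $p_{2}$ radially outward to restore area, and verify that the first-order change in $J$ is strictly positive. This reduces to strict convexity of the map $A \mapsto J$ along the one-parameter family of ice-cream cones $\mathrm{conv}(B(0,\rho) \cup \{(R,0)\})$, which can be checked either by direct differentiation of the closed-form expressions for area and polar moment, or qualitatively from the asymptotics $A(R) \sim \rho R$ and $J(R) \sim \rho R^{3}/12$ together with smoothness. Strict convexity means that concentrating the excess area at a single distal vertex strictly outperforms distributing it among several, contradicting maximality; hence $k = 1$ and $\Omega^{*} = \mathrm{conv}(B(0,\rho) \cup \{p^{*}\})$ with $\|p^{*}\|$ determined by the constraint $|\Omega^{*}|=1$.

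The main obstacle is the sign of $dJ$ in the merging step: the deformation couples a loss of $J$ near one vertex with a gain near another, and the bookkeeping involves the geometry of the two tangent kites attached to the disk. Reformulating as strict convexity of $A \mapsto J$ along the ice-cream cone family reduces this to a clean single-variable problem, but still requires expressing polar moment as a function of area through the explicit (and somewhat involved) area formula for an ice-cream cone.
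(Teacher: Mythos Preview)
Your approach is genuinely different from the paper's. After the (shared) Blaschke selection step, the paper does not touch Lagrange multipliers or first-order optimality at all. Instead it writes $J(\Omega) = \int_0^\infty r^2 f(r)\,dr$ with $f(r) = |\{x \in \Omega : \|x\|=r\}|$, integrates by parts to $J(\Omega) = M(\rho)^2 - 2\int_0^{M(\rho)} r\,F(r)\,dr$ where $F(r) = |\Omega \cap B(0,r)|$, and then proves the stronger \emph{pointwise} statement (their Lemma~2): for every fixed $r>\rho$, the quantity $F(r)$ is minimized, among all admissible $\Omega$, by the ice-cream cone. Since the same shape is optimal for every $r$ simultaneously, it maximizes $J$. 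Lemma~2 is handled by a direct geometric shadow/cone argument with no calculus. A side benefit of this route is that it works verbatim with $\|x\|^2$ replaced by any strictly increasing radial weight; your Euler--Lagrange computation is tied to the specific exponent.

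Your route is plausible but, as written, has a real gap in the structure step. The claim that ``at almost every $\theta$ one of the two constraints is active'' does not follow from first-order conditions alone: the unconstrained Euler--Lagrange equation for $\int r^4$ under $\int r^2$ fixed is $r=\mathrm{const}$, so a circular arc of radius $c>\rho$ centered at the origin satisfies the first-order condition with neither $r=\rho$ nor the zero-curvature constraint active. Excluding such free arcs requires a second-variation or explicit perturbation argument showing they are unstable for the \emph{maximization} problem, and this has to respect the convexity constraint, which sharply limits the admissible perturbations (a naive bump of size $\epsilon$ forces curvature of order $\epsilon n^2$ if it oscillates on scale $1/n$). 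The merging step also needs care when two adjacent vertices share a common tangent to $B(0,\rho)$, since then the two ``cones'' are not independent and the reduction to convexity of $A\mapsto J$ along a single ice-cream family does not apply directly. None of this is fatal, but these are exactly the places where the real work lives, and the paper's layer-cake argument sidesteps them entirely.
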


There is nothing particularly special about the weight $\|x\|^2$ and the statement holds if $\|x\|^2$ is replaced by a strictly monotonically increasing function . 

\begin{center}
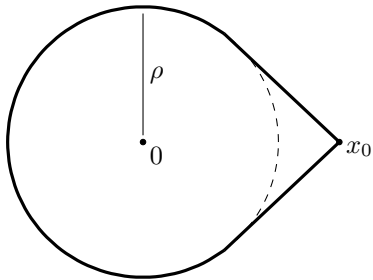
\begin{figure}[h!]
\begin{tikzpicture}[scale=0.9]
\filldraw (2.9,0) circle (0.04cm);
\draw [very thick] (2.9,0) -- (1.2, 1.6);
\draw [very thick] (2.9,0) -- (1.2, -1.6);
\node at (3.2,-0.1) {$x_0$};
\filldraw (0,0) circle (0.04cm);
\node at (0.2, -0.2) {0};
\draw (0, 0.1) -- (0, 1.9);
\node at (0.2, 1) {$\rho$};
\draw [very thick,domain=53:307, smooth] plot ({2*cos(\x)}, {2*sin(\x)});
\draw [dashed, domain=0:53, smooth] plot ({2*cos(\x)}, {2*sin(\x)});
\draw [dashed, domain=307:360, smooth] plot ({2*cos(\x)}, {2*sin(\x)});
\end{tikzpicture}
\caption{The extremal configuration: the distance of $x_0$ is uniquely determined by $\rho$ and $|\Omega| = 1$.}
\end{figure}
\end{center}

The Blaschke selection theorem immediately implies the existence of a (not necessarily unique) extremizing convex domain for this problem which we call  $\Omega$. It remains to understand its properties. We introduce $f:(0, \infty) \rightarrow \mathbb{R}$ via
$$ f(r) = \left| \left\{x \in \Omega: \|x\| = r\right\} \right|.$$
We have
$$ \int_{0}^{\infty}{f(r) dr} = 1 \qquad \mbox{and want to maximize} \qquad \int_{0}^{\infty}{f(r) r^2 dr}.$$
We know that there exists a constant $M(\rho)$ such that $f(r) = 0$ for all $r > M(\rho)$ (since $\Omega$ is a convex set contains a disk of radius $\rho$, it cannot have an arbitrarily large diameter since that would violate $|\Omega|=1$).
Integration by parts yields
\begin{align*}
 \int_{0}^{M(\rho)}{f(r) r^2 dr} &= F(r) r^2 \big|_{0}^{M(\rho)} - 2 \int_{0}^{M(\rho)}{ F(r) r dr}\\
 &= M(\rho)^2 - 2 \int_{0}^{M(\rho)}{ F(r) r dr}.
\end{align*}
This shows that instead of maximizing the integral over $f(r) r^2$, we could instead try to minimize the integral over $F(r) r$, where $F(r)$ is the amount of area of $\Omega$ of distance at most $r$ from the origin. This, however, turns out to have a relatively simple solution for each value of $r$ that happens to not depend on $r$. In particular, Lemma 1 is implied by a (stronger) geometric statement which we now prove.

\begin{lemma} Let $0 < \rho \leq 1/\sqrt{\pi}$. For each $r > \rho$, among all convex sets $\Omega \subset \mathbb{R}^2$ with area 1 that contain a disk of radius $\rho$ around the origin, the area
$$ \left| \left\{x \in \Omega: \|x\| \leq r\right\}\right| \qquad \mbox{is minimized}$$
 if $\Omega$ is the convex hull of a disk of radius $\rho$ and a point (the point is uniquely defined by the volume constraint $|\Omega| = 1$ up to rotational invariance).
\end{lemma}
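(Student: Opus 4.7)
The plan is to prove Lemma 2 in three movements: an existence step via Blaschke, an a priori bound confining $\Omega$ to a disk of radius $d^{*}(\rho)$, and a variational analysis pinning down the shape of the minimizer.

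\emph{Existence.} Let $\mathcal K_\rho$ denote the class of convex subsets of $\mathbb R^{2}$ with area $1$ containing $B_\rho(0)$. By the a priori bound below, the diameter in $\mathcal K_\rho$ is uniformly bounded, so the class is compact in the Hausdorff topology by Blaschke's selection theorem; since $\Omega \mapsto |\Omega \cap B_r|$ is continuous, a minimizer $\Omega^{*}$ exists.

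\emph{A priori bound.} For any $\Omega \in \mathcal K_\rho$ and any $x \in \Omega$, convexity gives $\operatorname{conv}(B_\rho \cup \{x\}) \subseteq \Omega$, and an elementary computation yields
$$ |\operatorname{conv}(B_\rho \cup \{x\})| = \rho \sqrt{\|x\|^{2} - \rho^{2}} + \rho^{2}\bigl(\pi - \arccos(\rho/\|x\|)\bigr), $$
strictly increasing in $\|x\|$. The constraint $|\operatorname{conv}(B_\rho \cup \{x\})| \leq 1$ forces $\|x\| \leq d^{*}(\rho)$, where $d^{*}(\rho)$ is the unique value at which equality holds. Hence $\Omega \subseteq \overline{B_{d^{*}}}$, and equality of the maximal distance forces $\Omega = IC_{d^{*}} := \operatorname{conv}(B_\rho \cup \{(d^{*},0)\})$ up to rotation. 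In particular the lemma is trivial for $r \geq d^{*}$, so assume $r \in (\rho, d^{*})$.

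\emph{Shape of the minimizer.} A boundary first variation forces $\partial \Omega^{*}$ to consist only of arcs of $\partial B_\rho$ and straight segments: if $\partial \Omega^{*}$ contained a smooth, strictly convex piece $\gamma$ disjoint from $\partial B_\rho$ and meeting $B_r$, we could chord-cut a small cap of $\Omega^{*}$ along $\gamma$ (strictly decreasing both $|\Omega^{*}|$ and $|\Omega^{*} \cap B_r|$) and restore the lost area by forming the convex hull of the truncation with a point placed just outside a smooth boundary point of $\Omega^{*}$ in $\{\|x\| > r\}$ (such points exist since $|\Omega^{*} \cap B_r| \leq |IC_{d^{*}} \cap B_r| < 1$). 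The net modification preserves area, convexity, and the containment $B_\rho \subset \Omega$, but strictly decreases $|\Omega \cap B_r|$, contradicting minimality. Hence $\Omega^{*} = \operatorname{conv}(B_\rho \cup \{x_{1}, \ldots, x_{n}\})$ for a finite set of tips. A merging step then reduces $n$ to $1$: replacing two cyclically adjacent tips $x_{i}, x_{i+1}$ by a single tip $x'$ on their angular bisector, with $\|x'\|$ determined by area preservation, gives $\Omega' \in \mathcal K_\rho$ in which $\|x'\|$ strictly exceeds $\max(\|x_{i}\|, \|x_{i+1}\|)$ (by the strict monotonicity of the ice-cream-cone area in the tip distance), and a direct comparison then shows $|\Omega' \cap B_r| < |\Omega^{*} \cap B_r|$. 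Iterating yields $n = 1$, and the area constraint then forces the single tip at distance $d^{*}$, so $\Omega^{*} = IC_{d^{*}}$.

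The main obstacle is the final merging inequality: one must carefully track how the tangent-line structure shifts as two tips coalesce into a single farther tip, and verify that the area released from inside $B_r$ along the old tangent wedges outweighs the area newly captured inside $B_r$ along the new tangent wedge of the coalesced cone. This is geometrically plausible but requires a delicate local area accounting near the tangent points of $B_\rho$.
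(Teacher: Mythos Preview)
Your approach is genuinely different from the paper's, and as written it has two real gaps. First, the variational step does not deliver the structure you claim from it: ruling out smooth, strictly convex boundary arcs that \emph{meet} $B_r$ still permits (i) flat boundary segments inside $B_r$ that are not tangent to $B_\rho$ (a polygon containing $B_\rho$ survives your perturbation), and (ii) arbitrary convex boundary pieces lying entirely in $\{\|x\|>r\}$. Neither is excluded by your chord-cut argument, so the jump to ``$\Omega^{*}=\operatorname{conv}(B_\rho\cup\{x_1,\dots,x_n\})$ for finitely many tips'' is unjustified. Second, and as you yourself concede, the merging inequality $|\Omega'\cap B_r|<|\Omega^{*}\cap B_r|$ is only asserted. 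This is exactly the crux of the problem, not a detail to defer; it also needs a case split according to whether the boundary between two adjacent tips is the chord $[x_i,x_{i+1}]$ or a pair of tangents to $B_\rho$, and the ``plausible'' accounting you describe has to be carried out in each case. A proposal that ends by naming its own main obstacle as unresolved is not yet a proof.

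The paper avoids both difficulties with a short non-variational argument. Set $A=|\Omega\cap\{\rho\le\|x\|\le r\}|$, $B=|\Omega\cap\{\|x\|\ge r\}|$, and introduce the auxiliary length $\ell=|\Omega\cap\{\|x\|=r\}|$. Convexity then gives two one-parameter inequalities: $B$ is bounded above in terms of $\ell$ (the set $\Omega\cap\{\|x\|>r\}$ is trapped in cones determined by the endpoints of the arcs comprising $\Omega\cap\{\|x\|=r\}$, and this bound is largest when those arcs coalesce into one), while $A$ is bounded below in terms of $\ell$ (each point of $\Omega\cap\{\|x\|=r\}$ casts a convex shadow into the annulus, and the total shadow is smallest when the arc is connected). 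Both inequalities are sharp precisely for the ice-cream cone, and since $A+B=1-\pi\rho^{2}$ is fixed, chaining them pins down the extremizer directly, with no existence, perturbation, or merging step required.
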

\begin{proof}
There is a total of 
$$  0 < |\Omega| - \rho^2 \pi = 1 - \rho^2 \pi$$
area outside the disk of radius $\rho$ and we assume that 
$$ A =  \left| \left\{x \in \Omega: \rho \leq \|x\| \leq r\right\}\right| \qquad \mbox{and} \qquad B =  \left| \left\{x \in \Omega:  \|x\| \geq r\right\}\right|.$$
We are interested in understanding which convex domain minimizes $A$ or, since $A+B = 1 - \rho^2 \pi$ is fixed, which shape maximizes $B$. 
We introduce a third quantity, the length $\ell$ of the level set $r$,
$$ \ell = \left| \left\{ x \in \Omega: \|x\| =r \right\} \right|.$$

\begin{center}
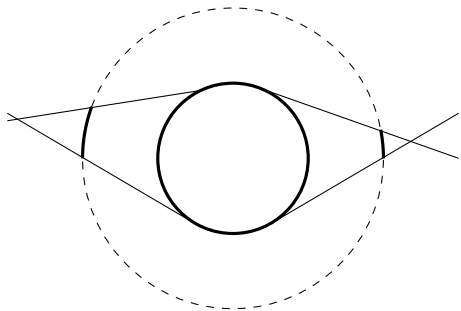
\begin{figure}[h!]
\begin{tikzpicture}
\draw [very thick] (0,0) circle (1cm);
\draw [dashed] (0,0) circle (2cm);
\draw [very thick, domain=0:11, smooth] plot ({2*cos(\x)}, {2*sin(\x)});
\draw [very thick, domain=160:180, smooth] plot ({2*cos(\x)}, {2*sin(\x)});
\draw (0.45, -0.9) -- (3,0.6);
\draw (0.45, 0.9) -- (3,0);
\draw (-0.45, -0.9) -- (-3,0.6);
\draw (-0.45, 0.9) -- (-3,0.5);
\end{tikzpicture}
\caption{The circle of radius $\rho$, the set $\left\{x \in \Omega: \|x\|= r\right\}$ and the induced cones.}
\end{figure}
\end{center}

We first argue that, depending on the size of $B$, $\ell$ cannot be too small. This can be done as follows: a priori we have no clear idea about the
structure of $\left\{ x \in \Omega: \|x\| =r \right\}$. For example, it could be comprised
of several circular arcs. We note that it could a priori also be comprised of an entire circle, however, we are interested in lower bounds on $\ell$ in terms of $B$ so this case is of less interest.
Suppose now that  $\left\{ x \in \Omega: \|x\| =r \right\}$ is comprised of several circular arcs. Then the fact that the endpoints of the circular arcs are endpoints implies, together with convexity, the existence of cone structures in which the set $\left\{ x \in \Omega: \|x\| > r \right\}$ must be contained, thus giving an upper bound on $B$. Moreover, we note that this upper bound is maximal if $\left\{ x \in \Omega: \|x\| =r \right\}$ is comprised of a single circular arc of length $\ell$. This implies an upper bound on $B$ in terms of $\ell$ or, conversely, it implies a lower bound on $\ell$ given $B$. Moreover, the convex hull of the disk and a point shows that this bound is sharp. We introduce $f(x)$ as the smallest possible length that $\ell$ can have if $|B| =x$, i.e.
$$ f(x) = \inf \left\{ \ell: |B| = x\right\}.$$
We have just shown that the extremal case occurs when $B$ has a cone structure. This would allow us to explicitly compute $f$ as a geometric quantity but we will only need a very basic property: $f$ is monotonically increasing. \\
As for the second part of the argument, we observe that there must be a lower bound on $A$ given $\ell$ (see the same Figure). Each element $x\in \Omega$ with $\|x\|=r$ induces, by convexity, a region which we know has to be a subset of $A$: this is the convex hull between the point $x$ and the disk. The total amount of area induced is minimized if these induced areas overlap as much as possible which happens when $\left\{ x \in \Omega: \|x\| =r \right\}$ is a circular arc. From this, we can define
$$ g(x) = \inf \left\{ |A|: \ell = x \right\}.$$
Again, by the previous argument we can actually determine $g$ in terms of trigonometric functions. However, once more the precise form of $g$ is not important, only its monotonicity is. The crucial ingredient is now that for both $f$ and $g$ the extremal configuration is actually the same is: both are sharp for the convex hull of a disk and a point outside. However, $A+B  = 1-\rho^2 \pi$. We observe that, by definition,
$$ \ell \geq f(B).$$
Then, by monotonicity of $g$, we have
$$ A \leq g(f(B)).$$
Since $A+B =  1-\rho^2 \pi$, we have
$$ B \leq 1 - \rho^2 \pi - g(f(B)).$$
This provides an upper bound on how big $B$ can be. Since all these inequalities are sharp for the case of $\Omega$ being the convex hull of a disk and a point outside, we obtain the desired result.
\end{proof}

\subsection{A Remark on the $P-$functional.}
Every bound in this paper is a consequence of Sperb's $P-$functional (for solutions of $-\Delta u = 1$)
$$ P(u) = |\nabla u|^2 + 2 u$$
and the fact that $P(u)$ assumes its maximum in a critical point (which is unique and the point where $u$ is maximal). Since $|\nabla u|$ assumes
its maximum on the boundary, this implies 
$$ \left\| \nabla u \right\|_{L^{\infty}(\Omega)}^2 \leq 2 \left\| u\right\|_{L^{\infty}(\Omega)}.$$
We illustrate this numerically for a domain $\Omega$ on which we obtain
$$ \| \nabla u\|_{L^{\infty}(\Omega)} \sim 0.358 | \Omega|^{1/2}$$
and which we believe to be somewhat close to the extremal case. The inequality is, naturally valid, however, it is not close to optimal since
 $$0.128 \sim  \left\| \nabla u \right\|_{L^{\infty}(\Omega)}^2 \leq 2 \left\| u\right\|_{L^{\infty}(\Omega)} \sim 0.15.$$

\begin{figure}[h!]
\begin{minipage}[l]{.48\textwidth}
\begin{tikzpicture}
\node at (0,0) {\includegraphics[width = 1.1\textwidth]{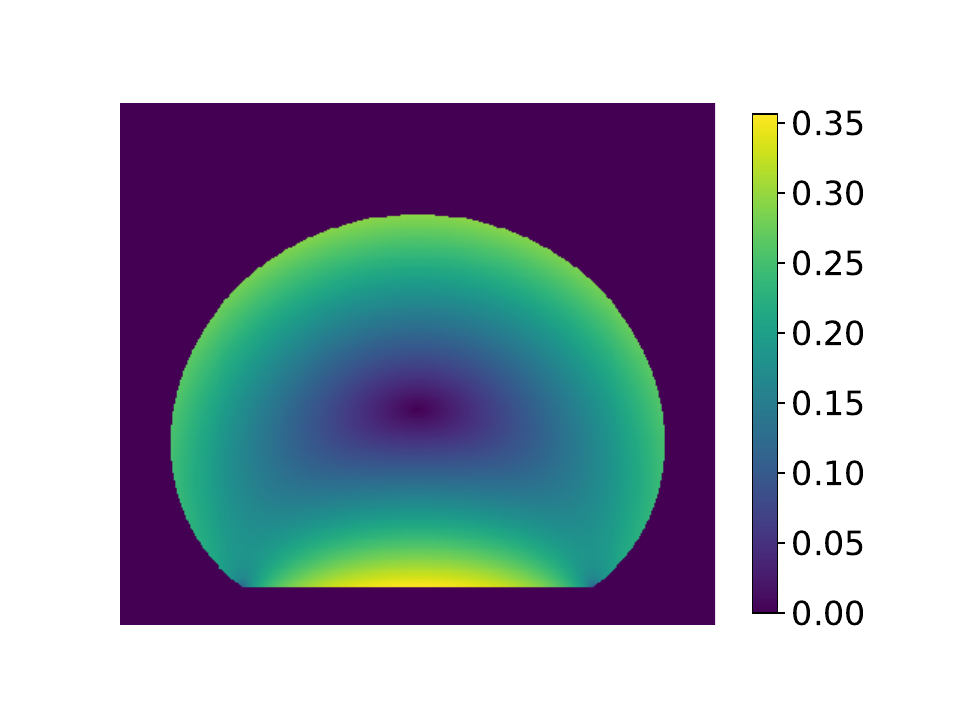}};
\end{tikzpicture}
\end{minipage} 
\begin{minipage}[r]{.48\textwidth}
\begin{tikzpicture}
\node at (0,0) {\includegraphics[width = 1.1\textwidth]{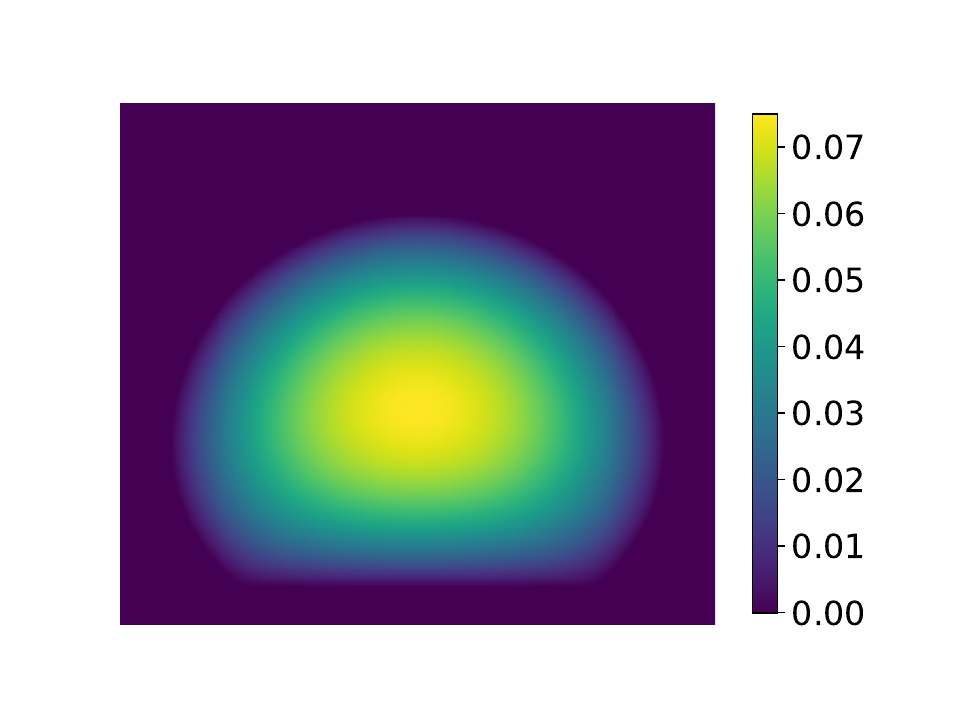}};
\end{tikzpicture}
\end{minipage} 
\caption{A domain $\Omega$ for which $ \| \nabla u\|_{L^{\infty}(\Omega)} \sim 0.358 | \Omega|^{1/2}$ and the size of derivative (left) and the size of the solution.} 
\end{figure}

This leads to a natural observation: the argument on which we base all our estimates of the normal derivative is intrinsically tied to the quality of the $P-$functional: in particular, if the $P-$functional is not close to a constant, then our argument becomes lossy in this step and we will be unable to obtain sharp estimates using this approach.

\begin{figure}[h!]
\begin{minipage}[l]{.48\textwidth}
\begin{tikzpicture}
\node at (0,0) {\includegraphics[width = 1.1\textwidth]{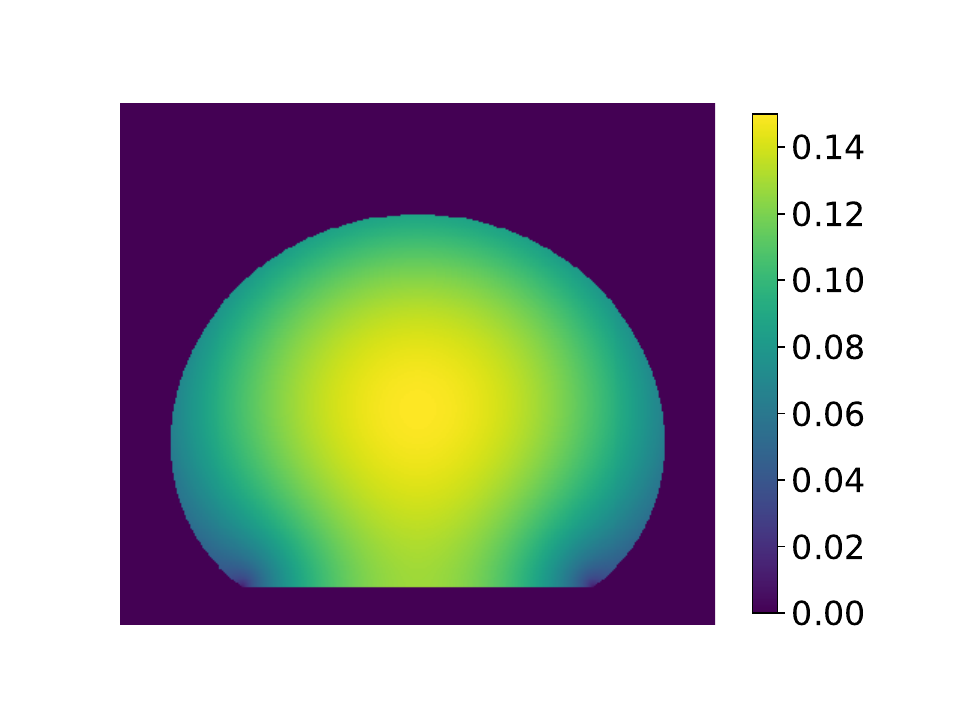}};
\end{tikzpicture}
\end{minipage} 
\begin{minipage}[r]{.48\textwidth}
\begin{tikzpicture}
\node at (0,0) {\includegraphics[width = 1.1\textwidth]{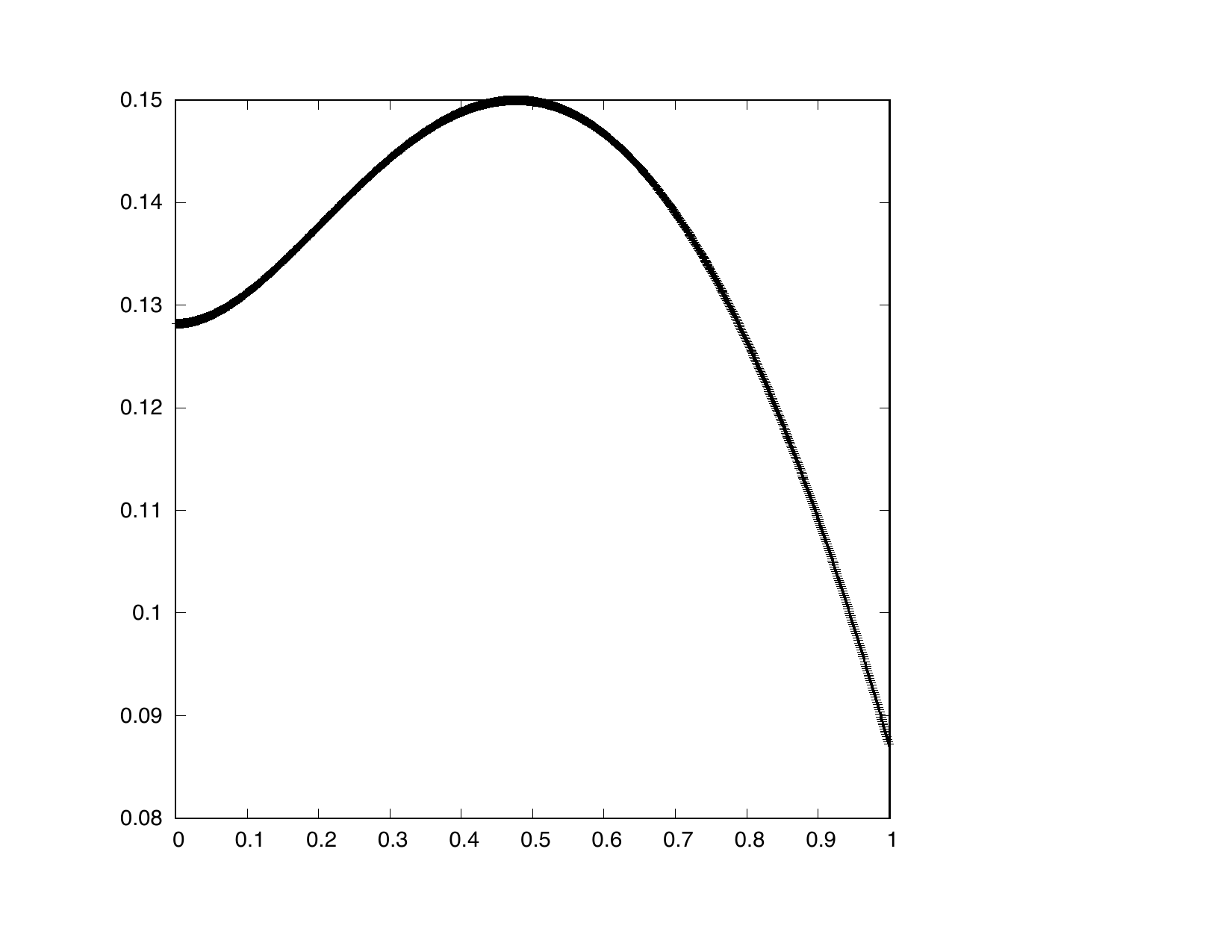}};
\end{tikzpicture}
\end{minipage} 
\caption{A domain $\Omega$ for which $ \| \nabla u\|_{L^{\infty}(\Omega)} \sim 0.358 | \Omega|^{1/2}$ and the size of $P-$functional on the domain (left) as well as restricted to the line segment that starts in the point of maximal derivative and goes through the global maximum (right).} 
\end{figure}

A natural question is thus: what sort of arguments are at our disposal when it comes to obtaining sharp bounds on the derivative of $-\Delta u = 1$? Can the $P-$functional approach be refined? Are there other arguments? We note that the argument of Jianfeng Lu and the second author \cite{jianfeng} does not require the $P-$functional but faces other challenges. We believe this to be an interesting problem.

\section{Proof of Theorem 1 and Theorem 2}

\subsection{Outline of the Argument.} The argument is in two parts. To state it, we recall the notion of Fraenkel asymmetry
$\mathcal{A}$ of a set $\Omega \subset \mathbb{R}^n$: in short, the Fraenkel asymmetry is defined by
$$ \mathcal{A}(\Omega) = \inf_{|B| = |\Omega|}\frac{ \left| B \Delta \Omega \right|}{|\Omega|},$$
where $B$ ranges over all balls in $\mathbb{R}^n$ that have the same volume as $\Omega$ and 
$A \Delta B = (A \setminus B) \cup (B \setminus A)$ is the symmetric difference of two sets. The Fraenkel
asymmetry satisfies $0 \leq \mathcal{A}(\Omega) \leq 2$ and is a quantitative measure of how close a set
is to a ball.
\begin{enumerate}
\item We show that if $\mathcal{A}(\Omega) \in (\varepsilon, 2)$, then
$$ \| \nabla u\|_{L^{\infty}} \leq \frac{1}{\sqrt{2\pi}} - c_{\varepsilon},$$
where $c_{\varepsilon} > 0$ is a constant depending only on $\varepsilon$.
\item This first part implies that it suffices to prove 
$$ \| \nabla u\|_{L^{\infty}} \leq \frac{1}{\sqrt{2\pi}} - c$$
for some universal $c>0$ under the additional assumption $\mathcal{A}(\Omega) \in (0,\varepsilon)$ where
$\varepsilon$ is a parameter that we can choose (though it has to be positive and universal). This we do via a probabilistic argument.
\end{enumerate}

\subsection{Part 1: Fraenkel asymmetry is large.}
The first part of the argument, an improved gradient estimate for domains that have a large Fraenkel asymmetry, is easy to describe. 
\begin{proposition} There exists a universal constant $c>0$ such that if $\Omega \subset \mathbb{R}^2$ is convex with $|\Omega| = 1$ and $-\Delta u = 1$ with Dirichlet boundary conditions, then
$$ \| \nabla u\|_{L^{\infty}(\Omega)} \leq \frac{1}{\sqrt{2\pi}} - c \cdot \mathcal{A}(\Omega)^3.$$
\end{proposition}
\begin{proof}

We will, throughout the rest of the argument, work with $-\Delta u = 2$ to simplify exposition (and to have the right scaling for Brownian motion). Our argument proceeds as follows. We first recall a bound due to Sperb.

\begin{thm}[Sperb's P-function \cite{sperb}]  Let $\Omega \subset \mathbb{R}^2$ be convex and assume that $u$ is the function satisfying $-\Delta u = 2$ with Dirichlet boundary conditions on $\partial \Omega$. Then
$$ P(u) = | \nabla u|^2 + 4u \qquad \mbox{assumes its maximum at a critical point of}~u.$$
Moreover, the gradient assumes its maximum on the boundary and thus
$$ \| \nabla u\|_{L^{\infty}(\Omega)} \leq 2 \| u\|^{1/2}_{L^{\infty}(\Omega)}.$$
\end{thm}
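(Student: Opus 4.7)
The plan is to prove the two properties of $P = |\nabla u|^2 + 4u$ by maximum-principle arguments and then combine them. First I would compute, via Bochner's identity and $\Delta u \equiv -2$,
\[ \Delta P = 2|D^2 u|^2 + 2\nabla u\cdot\nabla(\Delta u) + 4\Delta u = 2|D^2 u|^2 - 8, \]
which is not obviously signed. The crucial manipulation (essentially due to Payne) is to fix a point where $\nabla u \neq 0$, pick coordinates so that $\nabla u = (|\nabla u|,0)$, and use the two relations $u_{11}+u_{22}=\Delta u=-2$ and $\nabla P = 2\, D^2 u\, \nabla u + 4\nabla u$ to solve explicitly for every entry of $D^2 u$ in terms of $\nabla P$ and $|\nabla u|$. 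Substituting back into $|D^2 u|^2 = u_{11}^2 + 2u_{12}^2 + u_{22}^2$ yields the pointwise identity
\[ \Delta P = \frac{|\nabla P|^2}{|\nabla u|^2} - \frac{4\,\nabla u\cdot\nabla P}{|\nabla u|^2}, \]
valid wherever $\nabla u \neq 0$. Rearranged, this says that on the open set $\{\nabla u\neq 0\}$ the function $P$ satisfies the linear elliptic differential inequality
\[ \Delta P + \frac{4\nabla u}{|\nabla u|^2}\cdot\nabla P \;\geq\; 0. \]

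Next I would invoke the strong maximum principle to conclude that $P$ cannot attain an interior maximum at a point of $\{\nabla u \neq 0\}$ without being locally constant. To rule out a maximum on $\partial \Omega$, I would apply Hopf's lemma: using the boundary conditions $u=0$ and $\nabla u = u_n n$, together with the two-dimensional identity $\Delta u|_{\partial\Omega} = u_{nn} + \kappa u_n$, one computes
\[ \frac{\partial P}{\partial n}\bigg|_{\partial\Omega} = 2u_n u_{nn} + 4 u_n = 2 u_n(u_{nn}+2) = -2\kappa\, u_n^2 = -2\kappa\, |\nabla u|^2 \;\leq\; 0 \]
since $\kappa \geq 0$ for convex $\Omega$. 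This contradicts the strict positivity of $\partial_n P$ required by Hopf (with the interior sphere furnished by convexity) unless $P$ is constant, in which case the constancy still allows us to relocate the maximum to an interior critical point of $u$. Hence the maximum of $P$ is achieved at a critical point of $u$.

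The second assertion, that $|\nabla u|$ itself is maximized on $\partial \Omega$, follows from the much simpler identity $\Delta(|\nabla u|^2) = 2|D^2 u|^2 \geq 0$: $|\nabla u|^2$ is subharmonic, so the weak maximum principle puts its maximum on the boundary. Combining the two, let $x^{\ast} \in \partial \Omega$ with $|\nabla u(x^{\ast})| = \|\nabla u\|_{L^\infty}$, so that $P(x^{\ast}) = \|\nabla u\|_{L^\infty}^2$, and let $x_{\ast}$ be an interior critical point where $P$ is maximal, so $P(x_{\ast}) = 4 u(x_{\ast}) \leq 4\|u\|_{L^\infty}$. The inequality $P(x^{\ast}) \leq P(x_{\ast})$ delivers $\|\nabla u\|_{L^\infty}^2 \leq 4 \|u\|_{L^\infty}$, as claimed. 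The main technical obstacle I anticipate is the blow-up of the drift $4\nabla u/|\nabla u|^2$ at interior critical points: both the strong maximum principle and Hopf's lemma formally operate only where $\nabla u$ is bounded away from zero, so one has to argue component by component on $\{\nabla u \neq 0\}$ and use continuity of $P$ up to critical points and up to $\partial \Omega$ to transport the location of the maximum to where the theorem places it.
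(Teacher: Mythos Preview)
The paper does not prove this statement; it is quoted as a known result from Sperb's book \cite{sperb} and used as a black box in \S3.1. There is therefore no ``paper's own proof'' to compare against. Your argument is the classical one (essentially Payne--Sperb): derive the identity $\Delta P = |\nabla P|^2/|\nabla u|^2 - 4\,\nabla u\cdot\nabla P/|\nabla u|^2$ on $\{\nabla u\neq 0\}$, apply the strong maximum principle there, and rule out a boundary maximum via the Hopf computation $\partial_n P = -2\kappa|\nabla u|^2 \le 0$. The computations are correct, and your observation that $|\nabla u|^2$ is subharmonic (since $\Delta|\nabla u|^2 = 2|D^2u|^2\ge 0$ when $\Delta u$ is constant) is exactly the Polya argument for the maximum of the gradient being on the boundary.

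One point worth tightening: Hopf's lemma requires an interior ball condition at the boundary point in question, and an arbitrary convex domain need not satisfy this everywhere (corners, or flat pieces meeting at a $C^1$ but not $C^2$ junction). The usual fix is either to work first on smooth strictly convex domains and pass to the limit, or to note that the inequality $\|\nabla u\|_{L^\infty}^2 \le 4\|u\|_{L^\infty}$ is stable under approximation of $\Omega$ by smooth convex domains. Your remark about the drift blowing up at critical points is the right caveat; the standard resolution is exactly what you outline, working on $\{\nabla u\neq 0\}$ and using continuity of $P$.
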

This shows that it suffices to get a good bound on the maximum of the function, this then automatically implies a gradient bound. 
An estimate due to Larry Payne now provides an upper bound on the maximum of the torsion function in terms of the gradient energy (see also Salakhudinov \cite{sal}, Sperb \cite{sperbpayne} and Payne \cite{pay0, payque}).
\begin{thm}[Payne \cite{paynegood}] Let $\Omega \subset \mathbb{R}^2$ be simply connected and assume that $u$ is the function satisfying $-\Delta u = 2$ with Dirichlet boundary conditions on $\partial \Omega$.
Then
$$ \|u\|_{L^{\infty}(\Omega)}^2 \leq \frac{1}{2\pi} \int_{\Omega}{ |\nabla u|^2 dx}.$$
\end{thm}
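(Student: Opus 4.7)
My plan is to reformulate Payne's inequality in terms of the distribution function of $u$ and attack it via level sets. Since $u$ vanishes on $\partial\Omega$, integration by parts gives $\int_\Omega |\nabla u|^2\,dx = \int_\Omega u(-\Delta u)\,dx = 2\int_\Omega u\,dx$, so the claim is equivalent to $\|u\|_\infty^2 \leq \frac{1}{\pi}\int_\Omega u\,dx$. Setting $M = \|u\|_\infty$ and $\mu(t) = |\{u > t\}|$, the layer-cake representation gives $\int_\Omega u\,dx = \int_0^M \mu(t)\,dt$, so the goal reduces to the pointwise bound
$$ \int_0^M \mu(t)\,dt \geq \pi M^2.$$
I will obtain this by showing $\mu(t) \geq 2\pi(M-t)$ on $[0,M]$, which one integrates directly. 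Note that this bound is sharp: it is an equality on the disk, where $u(x) = (\rho^2 - |x|^2)/2$ and $\mu(t) = \pi(\rho^2 - 2t) = 2\pi(M-t)$.

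The pointwise estimate for $\mu(t)$ will come from the classical level-set differential inequality $-\mu'(t) \geq 2\pi$. To derive it, let $L(t) = \mathcal{H}^1(\{u = t\})$ denote the length of the level set. The coarea formula gives $-\mu'(t) = \int_{\{u=t\}} |\nabla u|^{-1}\,ds$, while the divergence theorem applied on $\{u > t\}$ (whose outward normal equals $-\nabla u/|\nabla u|$) combined with $-\Delta u = 2$ yields
$$ \int_{\{u=t\}} |\nabla u|\,ds = 2\mu(t).$$
Cauchy--Schwarz then produces
$$ L(t)^2 = \left(\int_{\{u=t\}} 1\,ds\right)^2 \leq \int_{\{u=t\}} |\nabla u|\,ds \cdot \int_{\{u=t\}} \frac{1}{|\nabla u|}\,ds = -2\mu(t)\mu'(t).$$
The planar isoperimetric inequality applied to $\{u > t\}$ (whose boundary, for $t > 0$, lies strictly inside $\Omega$ and consists of level curves of $u$) gives $L(t)^2 \geq 4\pi \mu(t)$. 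Combining the two bounds yields $-\mu'(t) \geq 2\pi$, and integrating from $t$ to $M$ using $\mu(M) = 0$ gives $\mu(t) \geq 2\pi(M - t)$, which completes the argument.

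The main subtlety I would want to verify carefully is the application of the isoperimetric inequality to the superlevel sets. Since $\Omega$ is only assumed to be simply connected (not convex), the set $\{u > t\}$ need not be simply connected: $u$ could in principle have multiple critical points or saddle behavior. However, the planar isoperimetric inequality $L^2 \geq 4\pi A$ is valid for any bounded open set with rectifiable boundary, regardless of connectivity, provided $L$ counts the full boundary (including interior components), which is exactly what the coarea computation above produces. A further minor point is handling the measure-zero set of critical values of $u$ at which $L(t)$ may fail to be smooth; this is standard and handled by Sard's theorem together with the absolute continuity of $\mu$.
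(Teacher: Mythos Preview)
The paper does not actually prove this theorem; it is quoted as a known result of Payne (with additional references to Salakhudinov and Sperb), and the paper only records the integration-by-parts identity $\int_\Omega |\nabla u|^2 = 2\int_\Omega u$ that you use in your first step. So there is no ``paper's own proof'' to compare against.

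Your argument is correct and is essentially the classical level-set mechanism behind Talenti's comparison principle in two dimensions: coarea plus the divergence theorem plus Cauchy--Schwarz plus the planar isoperimetric inequality yield $-\mu'(t)\geq 2\pi$, and integration gives $\mu(t)\geq 2\pi(M-t)$, hence $\int_0^M\mu\geq \pi M^2$. Two small remarks. First, your worry about simple connectedness is well handled: the inequality $L^2\geq 4\pi A$ holds for arbitrary bounded planar sets of finite perimeter, so multiple components or holes in $\{u>t\}$ cause no trouble; in fact your proof nowhere uses the hypothesis that $\Omega$ is simply connected. Second, for the final integration you do not need absolute continuity of $\mu$: since $\mu$ is monotone decreasing, Lebesgue's theorem for monotone functions already gives $\int_t^M(-\mu')\,ds\leq \mu(t)-\mu(M)=\mu(t)$, and combining with $-\mu'\geq 2\pi$ a.e.\ (valid on the full-measure set of regular values by Sard) yields $\mu(t)\geq 2\pi(M-t)$ directly.
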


We recall that the torsion function on a disk of radius $1/\sqrt{\pi}$ in polar coordinates is given by $u(r) = 1/(2\pi) - r^2/2$ allowing us to compute
$$  \|u\|_{L^{\infty}(\Omega)}^2 = \frac{1}{4\pi^2} =  \frac{1}{2\pi}\int_{0}^{1/\sqrt{\pi}}{ r^2 \cdot 2r \pi dr}= \frac{1}{2\pi} \int_{\Omega}{ |\nabla u|^2 dx}$$
and thus Payne's inequality is sharp on the disk. Our next step is to invoke Integration by parts: since $-\Delta u = 2$, we have
$$ \|u\|_{L^{\infty}(\Omega)}^2 \leq \frac{1}{2\pi} \int_{\Omega}{ |\nabla u|^2 dx} = \frac{1}{\pi} \int_{\Omega}{u(x) dx}.$$
The third ingredient is a fairly recent stability statement for torsional rigidity. 
\begin{thm}[Brasco, De Philippis and Velichkov \cite{brasco}] Let $\Omega$ be an open set in $\mathbb{R}^2$,
let $\Omega^*$ be the symmetrized open set with the same volume and let 
$$ T(\Omega) = \int_{\Omega}{u(x) dx},$$
where $u$ solves $-\Delta u = 2$ in $\Omega$ with Dirichlet boundary conditions. Then, for some universal $\tau > 0$,
$$ T(\Omega^*) - T(\Omega) \geq \tau \mathcal{A}(\Omega)^3.$$
\end{thm}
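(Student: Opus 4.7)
The plan is to combine Talenti's rearrangement proof of Saint Venant's inequality with the sharp quantitative isoperimetric inequality of Fusco, Maggi and Pratelli. Let $u$ solve $-\Delta u = 2$ on $\Omega$ with Dirichlet boundary conditions, let $u^*$ be the corresponding torsion function on $\Omega^*$, and write $\Omega_t = \{u > t\}$, $\mu(t) = |\Omega_t|$, $P(t) = \mathrm{Per}(\Omega_t)$. Integrating $-\Delta u = 2$ over $\Omega_t$ gives $2\mu(t) = \int_{\{u=t\}} |\nabla u|\, d\mathcal{H}^{1}$, while the coarea formula yields $-\mu'(t) = \int_{\{u=t\}} |\nabla u|^{-1}\, d\mathcal{H}^{1}$. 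Cauchy-Schwarz between the two delivers $P(t)^2 \leq 2\mu(t)(-\mu'(t))$, and the isoperimetric inequality $P(t)^2 \geq 4\pi\mu(t)$ then produces $-\mu'(t) \geq 2\pi$, with equality on $\Omega^*$. This recovers Talenti's rearrangement $u^{\#} \leq u^*$ and in particular $T(\Omega) \leq T(\Omega^*)$.

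To quantify the gap I would insert the Fusco-Maggi-Pratelli sharp quantitative isoperimetric inequality, which in the plane reads
$$ P(t)^2 \geq 4\pi \mu(t) \bigl(1 + c_0 \mathcal{A}(\Omega_t)^2\bigr).$$
Repeating the Talenti computation with this improved input yields the pointwise ODE improvement
$$ -\mu'(t) \geq 2\pi \bigl(1 + c_0 \mathcal{A}(\Omega_t)^2\bigr).$$
Write $\alpha = \mathcal{A}(\Omega)$. I would next show that $\mathcal{A}(\Omega_t) \geq c_1 \alpha$ holds on a $t$-interval of length at least $c_2 \alpha$, using that $\Omega_t$ depends continuously on $t$ in the symmetric-difference metric with a Lipschitz constant controlled by quantities already bounded in terms of $|\Omega|$. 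Integrating the ODE improvement over this $t$-range translates into a pointwise gap $u^*(r) - u^{\#}(r) \gtrsim \alpha^3$ on an annulus of positive area, and integrating in space delivers $T(\Omega^*) - T(\Omega) \geq \tau \alpha^3$.

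The hard part is making the asymmetry-propagation step quantitative. It is \emph{a priori} possible for $\Omega$ to be highly asymmetric while its inner level sets are nearly round (for instance if $\sqrt{u}$, which is concave by Makar-Limanov, is almost rotationally symmetric near its maximum), and one must rule this out uniformly. A concrete difficulty is that $-\mu'(t)$ can blow up as $t \to 0^+$ when $|\nabla u|$ vanishes along stretches of $\partial \Omega$, which disrupts naive attempts to bound $|\Omega \setminus \Omega_t|$ by $Ct$. The cubic exponent in the final bound precisely encodes this bottleneck: one factor of $\alpha^2$ from Fusco-Maggi-Pratelli and an additional factor of $\alpha$ from the $t$-range on which the asymmetry of $\Omega$ persists down to its level sets. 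Reaching the conjecturally sharp exponent $\alpha^2$ would instead require a selection-principle argument coupled with a Fuglede-type second-variation formula for $T$ at the ball.
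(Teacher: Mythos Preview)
The paper does not prove this statement; it is quoted as a black box from Brasco, De Philippis and Velichkov \cite{brasco} and used as one of three inputs (together with Sperb's $P$-function and Payne's inequality) to obtain the improvement away from the disk. So there is no ``paper's own proof'' to compare against.

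That said, your sketch has a genuine gap exactly where you flag it. The Talenti argument plus the sharp quantitative isoperimetric inequality gives you
\[
-\mu'(t) \geq 2\pi\bigl(1 + c_0\,\mathcal{A}(\Omega_t)^2\bigr),
\]
but to extract anything from this you need a lower bound on $\mathcal{A}(\Omega_t)$ in terms of $\mathcal{A}(\Omega)$, and your proposed mechanism---Lipschitz dependence of $\Omega_t$ on $t$ in symmetric-difference distance---does not hold without further hypotheses. For a general open set $\Omega$ (the theorem is not restricted to convex sets) there is no uniform control on $|\Omega \setminus \Omega_t|$ in terms of $t$ alone: think of a domain with many thin tentacles, where an arbitrarily large fraction of the area can lie in $\{0 < u < t\}$ for $t$ arbitrarily small. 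Even under convexity the best one gets from $\sqrt{u}$ concave is $|\Omega \setminus \Omega_t| \lesssim t^{1/2}$, which would degrade the exponent further. Your honest acknowledgement that ``the hard part is making the asymmetry-propagation step quantitative'' is correct, and the proposal as written does not close it.

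For the record, the actual proof in \cite{brasco} proceeds along the lines you mention at the very end rather than via level-set propagation: a selection principle (penalized minimization) reduces the problem to nearly spherical sets, and then a Fuglede-type second-variation computation for the torsional rigidity at the ball gives the quantitative gap. That route in fact yields the sharp exponent $2$ rather than the $3$ stated here; the exponent $3$ in the present paper is immaterial since any positive exponent suffices for the application.
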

Explicit estimates on the size of $\tau$ are available but presumably many orders of magnitude away from optimal \cite{brasco}
and we will not pursue an explicit quantitative estimate here. We also refer to the particularly nice survey \cite{brasco2}. In summary,
we now have
$$ \|u\|_{L^{\infty}(\Omega)}^2 \leq \frac{1}{\pi} \int_{\Omega}{u(x) dx} \leq \frac{1}{4\pi^2} - \frac{\tau}{\pi} \mathcal{A}(\Omega)^3.$$
Sperb's estimate implies
$$ \| \nabla u\|_{L^{\infty}(\Omega)} \leq 2 \| u\|^{1/2}_{L^{\infty}(\Omega)} = 2\left( \frac{1}{\sqrt{2\pi}} - \tau_2 \cdot \mathcal{A}(\Omega)^3\right).$$
We recall that all these arguments hold for the normalization $-\Delta u = 2$, the bound for $-\Delta u = 1$ is by a factor of 2 smaller.
\end{proof}

\subsection{Estimating Brownian Motion.} We now arrive at the heart of the argument. We recall that it suffices to show an improvement over
the gradient estimate $\|\nabla u\|_{L^{\infty}} \leq 1/\sqrt{2\pi}$ for domains that are close to a disk in terms of Fraenkel asymmetry.
Our goal is to estimate that largest derivative of 
\begin{align*}
-\Delta u &= 2 \qquad \mbox{inside}~\Omega\\
u &= 0 \qquad \mbox{on}~\partial \Omega
\end{align*}
on the boundary, where $\Omega \subset \mathbb{R}^2$ is a convex domain normalized to have measure $|\Omega|=1$ that satisfies $\mathcal{A}(\Omega) \leq \varepsilon_0$
for some $\varepsilon_0$ sufficiently small (but positive and universal). 
We proceed by estimating the
lifetime of Brownian motion itself, the beginning of our argument is adapted from \cite{jianfeng}. We estimate the
expected lifetime of Brownian motion started at distance $\varepsilon$ from the point $x_0 \in \partial \Omega$.  
Since $\Omega$ is convex, there is a supporting hyperplane (i.e. a line) in $x_0$ such that all of $\Omega$ is strictly contained
on one side of the hyperplane (and possibly the hyperplane itself). We pick a time $T$ (to be chosen later) and consider Brownian
motion running for $T$ units of time. Brownian motion in the plane can be decoupled into two independent (one-dimensional) Brownian motions 
in two coordinates.
 \begin{center}
\begin{figure}[h!]
\begin{tikzpicture}[scale=0.9]
\draw [thick] (0,0) -- (10,0);
\draw [very thick] (0,1)  to[out=330, in=180] (5,0) to[out=0, in=230] (9.5, 1.5);
\filldraw (5, 1) circle (0.04cm);
\draw [dashed] (5, 0.8) -- (5, 0);
\node at (4.8, 0.4) {$\varepsilon$};
\filldraw (5, 0) circle (0.04cm);
\node at (5.3, -0.2) {$x_0$};
\node at (9, -0.4) {supporting hyperplane};
\node at (6,2) {the convex set $\Omega$};
\end{tikzpicture}
\caption{An outline of the geometry.}
\end{figure}
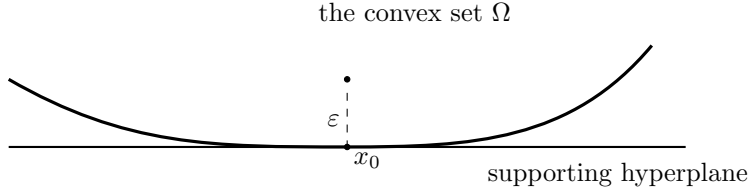
\end{center}

 We consider those as separate. For now we focus only, using the orientation suggested in the Figure, on the Brownian
motion acting in the $y-$coordinate. 
Clearly, if one starts rather close to the boundary, then the likelihood of hitting the boundary must be
rather large. This quantity is known and classical in probability theory.

\begin{lemma}[Reflection principle; e.g.~\cite{katz}] Let
  $\varepsilon > 0$ and let $T_{\varepsilon}$ be the hitting time for Brownian motion started at $B(0) = \varepsilon$,
  $T_{\varepsilon} = \inf\left\{ t > 0: B(t) = 0\right\}.$
  Then
$$ \mathbb{P}(T_{\varepsilon} \leq t) = 2 - 2\Phi\left( \frac{\varepsilon}{\sqrt{t}} \right),$$
where 
$$\Phi(z) = \frac{1}{\sqrt{2\pi}} \int_{-\infty}^{z} e^{-\frac{x^2}{2}} dx$$
 is the cumulative distribution function of $\mathcal{N}(0,1)$.
\end{lemma}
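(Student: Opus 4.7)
The plan is to reduce to a statement about the running maximum of a standard Brownian motion started at $0$ and then invoke the classical reflection principle. First I would translate: set $W(t) = \varepsilon - B(t)$, so that $W$ is a standard Brownian motion with $W(0) = 0$, and observe that $T_\varepsilon$ coincides with the first-passage time $\inf\{t > 0 : W(t) = \varepsilon\}$ of $W$ to the level $\varepsilon$. In particular,
$$ \{T_\varepsilon \leq t\} = \Big\{ \max_{0 \leq s \leq t} W(s) \geq \varepsilon \Big\},$$
so it suffices to compute the distribution of the running maximum of a standard Brownian motion.

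The main step is the reflection identity
$$ \mathbb{P}\Big( \max_{0 \leq s \leq t} W(s) \geq \varepsilon \Big) = 2\, \mathbb{P}(W(t) \geq \varepsilon),$$
which I would prove by applying the strong Markov property at the stopping time $T_\varepsilon$. Conditional on $T_\varepsilon \leq t$, the shifted process $s \mapsto W(T_\varepsilon + s) - \varepsilon$ is a standard Brownian motion independent of $\mathcal{F}_{T_\varepsilon}$, and hence is symmetric about $0$; this gives $\mathbb{P}(W(t) \geq \varepsilon \mid T_\varepsilon \leq t) = 1/2$. Combined with the inclusion $\{W(t) \geq \varepsilon\} \subset \{T_\varepsilon \leq t\}$, this yields the displayed reflection identity after multiplication by $\mathbb{P}(T_\varepsilon \leq t)$.

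Finally, since $W(t) \sim \mathcal{N}(0,t)$, a change of variables gives
$$ \mathbb{P}(W(t) \geq \varepsilon) = 1 - \Phi\Big( \frac{\varepsilon}{\sqrt{t}} \Big),$$
and the claimed formula $\mathbb{P}(T_\varepsilon \leq t) = 2 - 2\Phi(\varepsilon/\sqrt{t})$ follows immediately. As this is a textbook fact, there is no genuine obstacle in the proof; the single nontrivial input is justifying the strong Markov property at the hitting time $T_\varepsilon$, which is standard since $T_\varepsilon$ is a stopping time relative to the Brownian filtration and one-dimensional Brownian motion is a strong Markov process. For this reason the authors (and I) would simply cite the classical reference rather than repeat the argument.
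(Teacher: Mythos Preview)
Your argument is correct and is exactly the standard textbook proof of this fact. The paper does not give any proof of this lemma at all --- it is stated with a reference to Karatzas--Shreve and used directly --- so your anticipation that the authors simply cite the classical reference is accurate.
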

Exactly as in \cite{jianfeng}, we can use this identity to compute the density $\psi$ of the stopping time
$$ \psi(t) = \frac{d}{dt} \mathbb{P}(T_{\varepsilon} \leq t) =  \Phi' \left( \frac{\varepsilon}{\sqrt{t}} \right) \frac{\varepsilon}{t^{3/2}} = \frac{ \varepsilon}{\sqrt{2\pi} t^{3/2}} e^{-\frac{\varepsilon^2}{2t}}.$$
This distribution decays like $\sim t^{-3/2}$ and does not have a finite mean. However, the expected lifetime of particles hitting the threshold before time $T$ can be computed and is bounded by
$$ \mathbb{E}(T_{\varepsilon} \big| T_{\varepsilon} \leq T) = \int_{0}^{T}{ \psi(t) t dt} \leq \int_0^T { \frac{\varepsilon}{\sqrt{2\pi} t^{1/2}} dt} = \varepsilon \sqrt{\frac{2}{\pi}} \sqrt{T}.$$
We will work in the regime where $T > 0$ is fixed and $\varepsilon \rightarrow 0$. In that regime, we use
$$ \Phi \left( \frac{\varepsilon}{\sqrt{T}}\right) = \frac{1}{2} + \frac{1}{\sqrt{2\pi}}\frac{\varepsilon}{\sqrt{T}} + \mbox{l.o.t.}$$
and therefore
$$ \mathbb{P}\left(T_{\varepsilon} > T\right) = (1+o(1)) \sqrt{ \frac{2}{\pi}} \frac{\varepsilon}{\sqrt{T}} \qquad \mbox{as}~\varepsilon \rightarrow 0.$$
Here and henceforth, $o(1)$ will always refer to a quantity going to 0 as $\varepsilon \rightarrow 0$ for $T$ fixed.
We will now introduce a new ingredient into the mix. Instead of merely trying to control the number of particles that do not collide with the line up to time $T$,
we are also interested in the distribution of the survivors. For this purpose, we fix the interval $[0,\pi]$ and study the solution of the heat equation
$$ (\partial_t - \Delta) u = 0$$
with Dirichlet boundary conditions at 0 and $\pi$. The stochastic interpretation of the heat equation implies that the survival distribution at time $T$ is given
by the solution of $u(T,x)$ where $u(0,x) = \delta_{\varepsilon}$ is a Dirac mass in $\varepsilon$. We note that, in this formulation, we also introduce an
additional barrier at $\pi$ -- since we will only work in the setting of $\mathcal{A}(\Omega) \leq \varepsilon_0$, no part of $\Omega$ can cross the second
barrier and we obtain a valid upper bound.
The eigenfunctions of the Laplace operator $-\Delta$ with Dirichlet boundary conditions on $[0, \pi]$ are given by $\phi_k(x) = \sqrt{2/\pi}\sin{(k x)}$ and the eigenvalues are $k^2$. This implies that the heat kernel
can be written as (where we scale time a factor $1/2$ to scale correctly with respect to classical Brownian motion)
$$ p_t(x,y) = \frac{2}{\pi}\sum_{k=1}^{\infty}{e^{-k^2 t/2} \sin{(kx)} \sin{(ky)}}.$$
We are again working in a favorable regime: since we will keep time $T$ fixed and send $\varepsilon \rightarrow 0$, we obtain the survivor density
$$ p_{T}(\varepsilon, y) = \frac{2}{\pi}\sum_{k=1}^{\infty}{e^{-k^2 T/2} \sin{(k \varepsilon)} \sin{(ky)}} = (1+o(1))  \frac{2 \varepsilon}{\pi}\sum_{k=1}^{\infty}{e^{-k^2 T/2} k \sin{(ky)}}.$$
Since we have the explicit distribution, we should be able to recover the Lemma cited above and will now do so.
The total survival likelihood is given by
\begin{align*}
 \int_{0}^{\pi}{ p_T(\varepsilon, y) dy} &= (1+o(1))  \frac{2 \varepsilon}{\pi} \int_{0}^{\pi}{ \sum_{k=1}^{\infty}{e^{-k^2 T/2} k \sin{(ky)}}}\\
 &= (1+o(1))  \frac{4 \varepsilon}{\pi} \sum_{k \in \mathbb{N},~{\tiny \mbox{odd}}}{e^{-k^2 T/2} }
\end{align*}

\begin{center}
\begin{figure}[h!]
\begin{tikzpicture}
\node at (0,0,) {\includegraphics[scale=0.3]{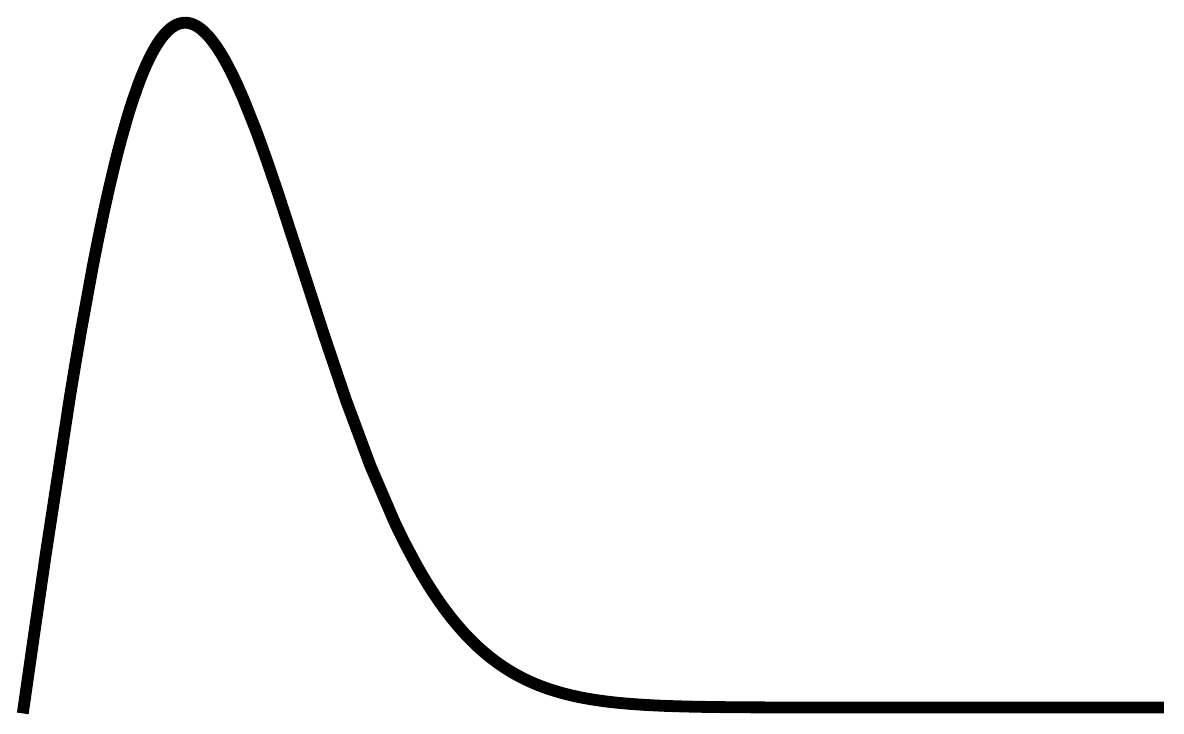}};
\draw [thick, ->] (-3.3,-2.15) -- (4,-2.15);
\draw [thick, ->] (-3.3,-2.15) -- (-3.3,2);
\filldraw (-3.28, -2.15) circle (0.04cm);
\node at (-3.3, -2.4) {0};
\filldraw (2.5, -2.15) circle (0.04cm);
\node at (2.5, -2.4) {$\pi$};
\draw [thick] (-3.3-0.1,1.35) -- (-3.3+0.1,1.35);
\node at (-3.9, 1.35) {$0.025$};
\end{tikzpicture}
\caption{The survivors when starting in $x=0.01$ and running until $T=0.2$. One observes that even though few survive, the survivors travel quite a distance. Moreover, the survival probability goes to 0 as $\varepsilon \rightarrow 0$ but the profile converges.}
\end{figure}
\end{center}

We would like this number to correspond to our survival estimate obtained above via the reflection principle. There is no reason it should since particles are
being absorbed on both ends of the interval. However, when $T$ is small, one would assume that the right-hand side of the interval has little to no effect. Indeed,
for $T \rightarrow 0$, we have
\begin{align*}
 (1+o(1))  \frac{4 \varepsilon}{\pi} \sum_{k~{\tiny \mbox{odd}}}{e^{-k^2 T/2} } &=  (1+o(1))  \frac{4 \varepsilon}{\pi} \sum_{k \in \mathbb{N},~{\tiny \mbox{odd}}}{e^{-(k \sqrt{T})^2/2} } \\
 &\sim \frac{2\varepsilon}{\pi} \int_0^{\infty}{e^{-(x\sqrt{T})^2/2} dx} \sim \sqrt{ \frac{2}{ \pi }} \frac{ \varepsilon}{\sqrt{T}}
\end{align*}
which recovers the estimate for the survival probability derived above for $T \rightarrow 0$. However, what is important for our successive argument is having direct access to
the distribution of the surviving particles.
Returning to the two-dimensional picture, we can think of running two-dimensional Brownian motion and then estimating the distribution of the survivors
after $T$ units of time. We do not have any idea where $\Omega$ is located, so we initially work in the larger domain created by the supporting hyperplane. We just derived the distribution along the
$y-$coordinate, the $x-$coordinate is an independent Brownian motion which has a Gaussian distribution after $T$ units of time.
This, in turn, implies that we have an estimate on the distribution of surviving Brownian motions in a half-space.

So far, we haven't made any use of the domain $\Omega$: clearly, it implies that our closed-form representation of survival probabilities is a strict upper
bound on the actual distribution within $\Omega$ (since some of the particles lie outside and some of the paths will leave $\Omega$ before returning).
Here, we use one more elementary Lemma (whose scaling could presumably be slightly improved but the simplest possible argument is enough for our
purposes here).
\begin{lemma}
 If $\Omega \subset \mathbb{R}^2$ is convex, has volume 1 and satisfies $\mathcal{A}(\Omega) \leq \delta_0$ where we assume $\delta_0 > 0$ to be sufficiently small but universal, then $\Omega \subset B$ for some ball $B$
satisfying 
$$ |B| \leq 1 + c \cdot \mathcal{A}(\Omega)^{1/2},$$
where $c$ is a universal constant depending only on $\delta$.
\end{lemma}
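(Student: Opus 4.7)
Let $\alpha=\mathcal{A}(\Omega)$ and fix an optimal ball $B_0$ of area $1$ with center $c_0$ and radius $R=1/\sqrt{\pi}$, so that $|\Omega\setminus B_0|=|B_0\setminus\Omega|=\alpha/2$. Let $p\in\Omega$ maximize $|p-c_0|$ and set $h=|p-c_0|-R$. If $h\le 0$ then $\Omega\subset B_0$ and we take $B=B_0$. Otherwise the plan is to prove $h\le C\sqrt{\alpha}$; taking $B=B(c_0,R+h)$ then yields $|B|=\pi(R+h)^2=1+2\sqrt{\pi}\,h+\pi h^2\le 1+c\sqrt{\alpha}$, which is the claim.

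The first step is to show that $B(c_0,R/2)\subset\Omega$ once $\alpha$ is small enough. Suppose some $z$ with $|z-c_0|\le R/2$ is missing; by convexity there is a line $L$ separating $z$ from $\Omega$. If $c_0$ lies on the $z$-side of $L$ (or on $L$), then $\Omega\cap B_0$ sits inside a half-disk, so $|\Omega\cap B_0|\le 1/2$, contradicting $|\Omega\cap B_0|=1-\alpha/2>1/2$ when $\alpha<1$. Otherwise $L$ is at distance $\le R/2$ from $c_0$ on $\Omega$'s side, and the circular cap of $B_0$ on the opposite side of $L$ then has area at least $R^2\pi/3-R^2\sqrt{3}/4>0$; this cap lies in $B_0\setminus\Omega$, contradicting $|B_0\setminus\Omega|=\alpha/2$ once $\alpha$ is small enough.

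Having secured $B(c_0,R/2)\subset\Omega$, convexity of $\Omega$ gives $\mathrm{conv}(B(c_0,R/2)\cup\{p\})\subset\Omega$. The second step is a lower bound on the area of this hull outside $B_0$. Placing $c_0=0$ and $p=(R+h,0)$, the excess region is bounded by the two tangent segments from $p$ to $\partial B(0,R/2)$ and the short arc of $\partial B_0$ between their entry points. The tangent from $p$ makes angle $\arcsin((R/2)/(R+h))$ with the $c_0p$-axis, equal to $30^\circ$ at $h=0$, and an elementary calculation shows that each tangent meets $\partial B_0$ at distance $2h/\sqrt{3}+O(h^2)$ from $p$. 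The enclosed region is, to leading order, an isosceles triangle with apex $p$, apex angle $60^\circ$, and side length $2h/\sqrt{3}$, whose area is $h^2/\sqrt{3}+O(h^3)$. Combining with $|\Omega\setminus B_0|=\alpha/2$ yields $h\le C\sqrt{\alpha}$ for $\alpha$ small (the regime where $h$ is small is automatic: if $h\ge R/2$ the same wedge computation produces a universal positive lower bound on $|\Omega\setminus B_0|$, forcing $\alpha$ to be bounded away from zero).

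The principal obstacle is the inner-ball step: one must rule out $\Omega$ being too thin anywhere, and the dichotomy between a supporting line that pushes $\Omega\cap B_0$ into a half-disk (immediately ruled out by $\alpha<1$) and a supporting line close to $c_0$ (which strips off a cap of $B_0$ of universal positive area) accomplishes this. The second step is then direct planar geometry whose only subtlety is tracking the quadratic dependence on $h$; the specific choice of inner radius $R/2$ is inessential, as any fixed positive fraction of $R$ produces the same $\sqrt{\alpha}$ scaling.
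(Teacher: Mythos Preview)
Your argument is correct and follows the same route as the paper: a point of $\Omega$ at excess distance $h$ beyond the optimal ball $B_0$ forces, via convexity, a region of area $\gtrsim h^2$ inside $\Omega\setminus B_0$, giving $h\lesssim\mathcal{A}(\Omega)^{1/2}$ and hence the containing ball of area $1+c\,\mathcal{A}(\Omega)^{1/2}$. The paper's own proof is a two-line sketch that asserts the $\sim z^2$ lower bound directly without isolating the inner-ball step; your supporting-line dichotomy (half-disk versus cap of definite area) makes explicit the ingredient the paper leaves implicit.
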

\begin{proof}
Let $B$ be the ball minimizing the Fraenkel asymmetry and let us assume for ease of exposition that $B$ is centered in 0. Since $|\Omega|=1$, we have $|B|=1$ and $B$
has radius $1/\sqrt{\pi}$. Suppose now that $x_0 \in \Omega$ and $\|x_0\| = 1/\sqrt{\pi} + z$ for $z$ sufficiently small. 

\begin{center}
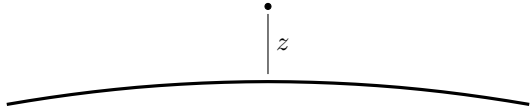
\begin{figure}[h!]
\begin{tikzpicture}
   \draw [black,very thick,domain=80:100] plot ({20*cos(\x)}, {20*sin(\x)});
   \filldraw (0, 21) circle (0.04cm);
   \draw (0, 20.1) -- (0, 20.9);
   \node at (0.2, 20.5) {$z$};
   \end{tikzpicture}
\caption{Proof of Lemma 4.}
\end{figure}
\end{center}
A rough estimate on the size of the convex hull induced by $z$ and the ball is that it is at least $z^2$. (A more refined estimate shows that one could do prove $\gtrsim z^{3/2}$ but this would not affect our main argument which is why we present the simpler estimate).
Therefore, $z \lesssim \mathcal{A}(\Omega)^{1/2}$. This shows the desired inequality.
\end{proof}

\subsection{The Main Argument.} Our goal is to show that if we start a Brownian motion within $\varepsilon$ distance from the boundary of a domain whose Fraenkel asymmetry is close to 0, then the expected lifetime can be bounded from above by $c \varepsilon$, where $c < 1/\sqrt{2\pi}$. There are essentially two different factors influencing the expected lifetime: in the beginning, when the particle is close to the boundary, there is a chance of impacting on the boundary close to the point where we start: this is an overwhelmingly local phenomenon, it does not depend very much on the global shape. After a while, the global shape becomes more important.

\begin{center}
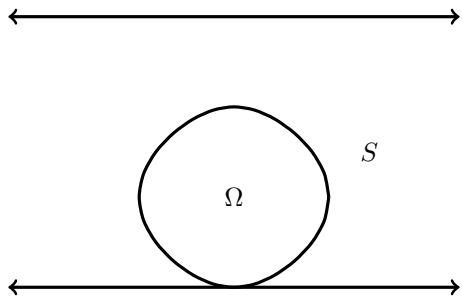
\begin{figure}[h!]
\begin{tikzpicture}[scale=0.6]
   \draw [very thick, <->] (-5, 4) -- (5, 4);
   \draw [black,very thick,domain=0:360, smooth] plot ({2*cos(\x)+0.1*cos(3*\x)}, {2*sin(\x)});
   \draw [very thick, <->] (-5, -2) -- (5, -2);
   \node at (0,0) {$\Omega$};
   \node at (3, 1) {$S$};
   \end{tikzpicture}
\caption{$\Omega$ is convex and $\mathcal{A}(\Omega)$ is close to 0 is contained in the strip of width $\pi$ (this sketch not to scale).}
\end{figure}
\end{center}

Our argument will mirror this fact: we fix a time $T$ (which we later set to be $T=0.13$). Up to time $T$, we do not care about the domain $\Omega$, we only care about the expected lifetime of particles within the strip (see Fig. 12) up to time $T$.  The strip is quite simple and we get explicit bounds for the number of particles that live up to time $T$, the expected hitting time of those who do not as well as \textit{the distribution of the survivors} (i.e. the density of Brownian motion that has not impacted on the boundary of the strip within the first $T$ units of time). From this, we bound the lifetime of Brownian motion as follows
\begin{enumerate}
\item for the ones that impact the boundary of the strip within $T$ units of time, we compute an upper bound
\item for the ones that survive up to time $T$ but are not in the domain, we pick $T$ as the upper bound of the lifetime
\item and for the ones that survive up to time $T$ and are within the domain at time $T$, we compute an upper bound on
their remaining lifetime.
\end{enumerate}
For the third part, we obviously do not know precisely what the domain looks like and will make use of Lemma 4.
We recall that the expected lifetime of Brownian motion in a disk of radius $1/\sqrt{\pi}$ centered at $(0, 1/\sqrt{\pi})$ is given by
$$ v(x,y) = \left(  \frac{1}{2\pi} - \frac{x^2 + (y - \pi^{-1/2})^2}{2} \right)_{+},$$
where we use the abbreviation $A_{+} = \max\left\{A, 0 \right\}$. The solution of the equation $-\Delta u = 2$ on a disk is continuous in the radius, it thus suffices
to show a sufficiently large gap for $\mathcal{A}(\Omega) = 0$. We will now collect all contributions: recall that
$$ \mathbb{E}(T_{\varepsilon} \big| T_{\varepsilon} \leq T) = \int_{0}^{T}{ \psi(t) t dt} \leq \int_0^T { \frac{\varepsilon}{\sqrt{2\pi} t^{1/2}} dt} = \varepsilon \sqrt{\frac{2}{\pi}} \sqrt{T}$$
and
$$ \mathbb{P}\left(T_{\varepsilon} > T\right) = (1+o(1)) \sqrt{ \frac{2}{\pi}} \frac{\varepsilon}{\sqrt{T}} \qquad \mbox{as}~\varepsilon \rightarrow 0.$$
We have
$$ \mathbb{E}~\mbox{lifetime} \leq  \mathbb{E}(T_{\varepsilon} \big| T_{\varepsilon} \leq T) \mathbb{P}(T_{\varepsilon} \leq T)  + \mathbb{E}(T_{\varepsilon} \big| T_{\varepsilon} > T) \mathbb{P}\left(T_{\varepsilon} >T\right).$$
We note that, for $T$ fixed and $\varepsilon \rightarrow 0$, $ \mathbb{P}(T_{\varepsilon} \leq T) $ converges to 1 at a rate linear in $\varepsilon$ and we can bound it from above by 1 without losing too much. This takes care of the first contribution: the expected lifetime of particles impacting within $T$ units of time on the boundary of the strip can be bounded from above by
$$ \mathbb{E}(T_{\varepsilon} \big| T_{\varepsilon} \leq T) \mathbb{P}(T_{\varepsilon} \leq T) \leq \mathbb{E}(T_{\varepsilon} \big| T_{\varepsilon} \leq T) \leq  \varepsilon \sqrt{\frac{2}{\pi}} \sqrt{T}.$$ 
The only quantity that remains to be analyzed is $ \mathbb{E}(T_{\varepsilon} \big| T_{\varepsilon} > T)$: this part we split into two contributions, the particles that are within the strip but outside of $\Omega$ and for which we bound the lifetime from above by $T$ (since they must have left the domain before that time) and the particles inside $\Omega$. For these particles, we use the Markov property: the torsion function $u$ within the domain tells us exactly for how much longer they are alive and thus
 $$ \mathbb{E}(T_{\varepsilon} \big| T_{\varepsilon} > T) \leq T +  \int_{\Omega}{s(x,y) u(x,y) dx dy}.$$

We can write down the conditional density function for the survivors within the two plates as $\varepsilon \rightarrow 0$ as
\begin{align*}
 s(x,y) &=  \frac{1}{\sqrt{2\pi T}} e^{-\frac{x^2}{2T}}   \sqrt{ \frac{2T}{\pi}}\sum_{k=1}^{\infty}{e^{-k^2 T/2} k \sin{(ky)}} \\
 &= \frac{ e^{-\frac{x^2}{2T}}}{\pi}   \sum_{k=1}^{\infty}{e^{-k^2 T/2} k \sin{(ky)}}.
 \end{align*}

 This is, finally, where we put the domain $\Omega$ into play. Denoting the solution of $-\Delta u = 2$ inside $\Omega$, we can use the Markov property of Brownian motion
 to write
This, a priori, seems tricky for two reasons: we do not know where exactly the domain $\Omega$ is and we do not know what the torsion function $u$ looks like.
The second problem is easy to address: by surrounding the domain $\Omega$ by a slightly larger ball (we refer to the Lemma above), we can simply bound the
torsion function from above by the torsion function in the larger ball. We do not know where the ball should be located and bypass this issue by simply taking the maximum over all translations of the ball. Put differently, we set
$$ u(x,y) = \frac{1}{2\pi} - \frac{(x-x_c)^2 + (y-y_c)^2}{2}$$
and write
 $$ \mathbb{E}(T_{\varepsilon} \big| T_{\varepsilon} > T) \leq T +  \max_{x_c, y_c} \int_{\Omega}{s(x,y) u(x,y) dx dy}.$$
If we can show that this quantity is strictly smaller than $1/\sqrt{2\pi}$ for some value $T$, then this is also true for slightly
larger radii by the continuity of all the involved objects. Picking $T=0.13$ leads to the upper bound $0.77$ and thus, divided
by 2, to the upper bound 0.385 that is strictly smaller than $1/\sqrt{2\pi}$.

\section{A Comment on the Numerics}
\subsection{The approach.} We conclude with a brief description of the numerical methods used to calculate the torsion function. Using classical potential theory, Poisson's equation 
\begin{align*}\label{eqn:torsion}
-\Delta u &= 1,\quad x \in \Omega, \\
u &= 0,\quad x \in \partial \Omega, \nonumber
\end{align*}
can be reduced to a boundary integral equation. In particular, writing 
$$u(x_1,x_2) = -\frac{x_2^2}{2} + \psi(x_1,x_2)$$
it follows that $\psi$ satisfies the boundary value problem
\begin{align*}
\Delta \psi(x) &= 0,\quad x \in \Omega, \\
\psi(x) &= \frac{x_2^2}{2},\quad x \in \partial \Omega,
\end{align*}
where $x= (x_1,x_2).$ 

The boundary value problem for $\psi$ can be reduced to a boundary integral equation using classical potential theory. In particular, we represent $\psi$ via the formula
\begin{align*}
\psi(x) = \int_{\partial \Omega} \frac{x-x'}{|x-x'|^2}\cdot n(x')\, \sigma(x')\,{\rm d}s_{x'},
\end{align*}
where $\sigma$ is an unknown function, which we will refer to as the density. We note that $\psi$ defined in this manner is harmonic in the interior of $\Omega$; all that is required is to choose $\sigma$ so that the boundary conditions are satisfied. Taking the limit as $x$ approaches a point $x_0$ on the boundary $\partial \Omega,$ we obtain
\begin{align*}
-\pi \sigma(x) + \int_{\partial \Omega} \frac{x-x'}{|x-x'|^2} \cdot n(x')\,{\rm d}s_{x'} = \frac{x_2^2}{2},
\end{align*}
for all $x_0 \in \partial \Omega.$ Existence and uniqueness of solutions to the above boundary integral equation for Lipschitz domains is a classical result. In this paper, when solving boundary integral equations of this form we use a method similar to that described in \cite{colton} when $\partial \Omega$ is twice-differentiable, and the approach described in \cite{hoskins} when $\Omega$ is a polygon.\\

\subsection{Details.} To obtain the lower bound stated above and shown in Figure 1 we use gradient descent to find the convex 128-sided polygon with unit area that maximizes $(\partial/\partial y) u (0,0)$ subject to the following constraints: the polygon is symmetric in $x,$ and each vertex $(x_i,y_i),$ $i=1,\dots,128$ is of the form
$$  (x_i, y_i) = r_i (\cos(\pi (i-1)/127),\sin (\pi(i-1)/127))$$
 for some positive number $r_i.$ In particular, the gradient is computed with respect to the parameters $r_1,\dots,r_{128}.$ The result is shown in Figure 1. In the optimization special care must be taken to preserve convexity. Here we did this by removing any vertex which was co-linear with its neighbors. Changing the number of vertices did not seem to increase the maximum substantially which might indicate that Figure 1 could be very close to the truth.
 
 \subsection{An alternative argument by Sweers.}  Guido Sweers (personal communication) has provided a different approach to the construction of extremal domains via conformal maps which we include with his permission. His approach leads to an explicitly given domain on which
 $$ \| \nabla u\|_{L^{\infty}(\Omega)} \sim 0.3561 | \Omega|^{1/2}.$$
 This is a little bit ($\sim 2 \cdot 10^{-3}$) smaller than our constant but has the advantage of coming with an explicit construction.
 Let $\Omega$ be a simply connected domain and let $h: E \rightarrow \Omega$ be a biconformal map. Then the solutions of
 $$ \begin{cases} - \Delta w = f \qquad &\mbox{in}~\Omega \\
 w=0 \qquad &\mbox{on}~\partial \Omega \end{cases}$$
 and 
 $$ \begin{cases} - \Delta u = \left| h'(\cdot)\right|^2 (f \circ h) \quad &\mbox{in}~E \\
 u=0 \qquad \qquad \qquad &\mbox{on}~\partial E \end{cases}$$
are related via
$$ (w \circ h)(x,y) = u(x,y).$$

\begin{figure}[h!]
\begin{minipage}[l]{.48\textwidth}
\begin{tikzpicture}
\node at (-2.5,0) {};
\node at (0,0) {\includegraphics[width = 0.5\textwidth]{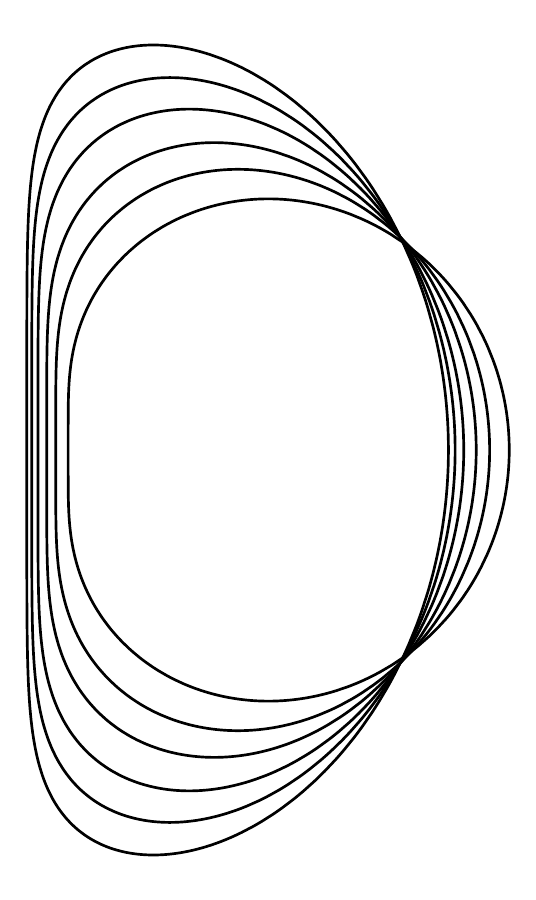}};
\end{tikzpicture}
\end{minipage} 
\begin{minipage}[r]{.48\textwidth}
\begin{tikzpicture}
\node at (-2.5,0) {};
\node at (0,0) {\includegraphics[width = 0.5\textwidth]{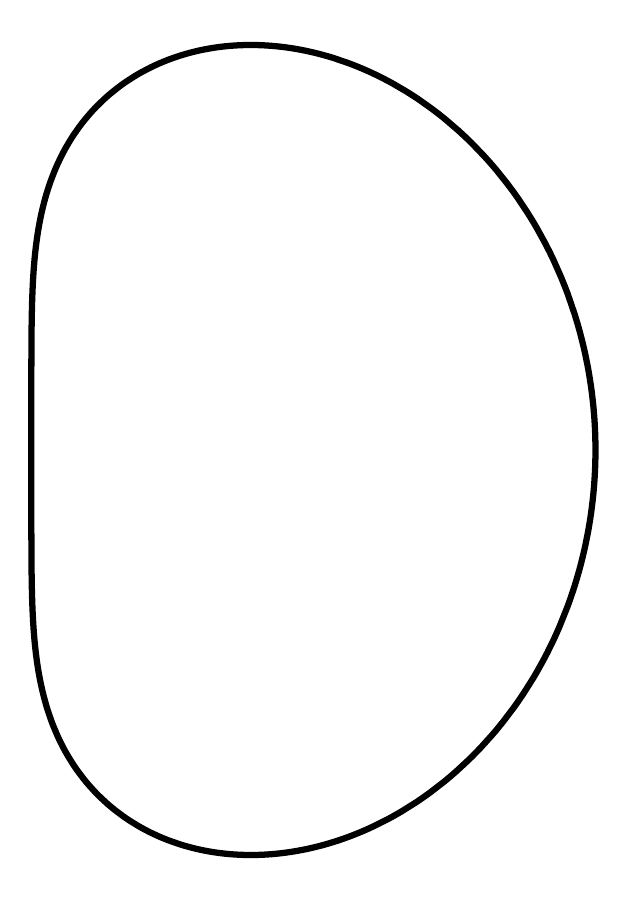}};
\end{tikzpicture}
\end{minipage} 
\caption{Left: $h_q(E_q)$ for $1 \leq q \leq 2$. Right: $h_q(E_q)$ for $q = 1.386$.} 
\end{figure}

This relationship is used to solve $-\Delta u = f$ on a domain $\Omega = h(E)$ where $E$ is an ellipse and $h$ is explicit. More precisely,
$$ E_q = \left\{ (x,y) \in \mathbb{R}^2: x^2 + (y/q)^2 \leq 1 \right\}$$
and 
$$ h_q(z) = z + \frac{3 q^2 + 1}{4 q^4 + 5 q^2 + 1} z^2 + \frac{1}{3(4q^2 + 1)} z^3.$$
$\Omega_q = h_q(E_q)$ is convex and $\partial \Omega_q$ is flat up to fourth order in $h_q(-1,0)$. The area formula gives
$$ \mbox{area}(h_q(E_q)) = \frac{\pi q (3q^2 + 1)(113 q^4 + 224 q^2 + 63)}{24 (q^2 + 1)(4 q^2 + 1)^2}.$$
Moreover, $| h_q'(x+ iy)^2|$ is a polynomial of order 4 in $(x,y)$ which allows one to solve
 $$ \begin{cases} - \Delta u_q = \left| h_q'(\cdot)\right|^2 \quad &\mbox{in}~E_q \\
 u=0 \qquad \qquad \qquad &\mbox{on}~\partial E_q \end{cases}$$
explicitly and the solution is a polynomial of order 6. One can write
$$ g(q) = \frac{1}{\sqrt{ \mbox{area}(h_q(E_q)) }} \frac{\partial u_q}{ \partial x} \big|_{h_q(-1,0)}.$$
One finds that $g$ is maximized for $q \sim 1.3866127$ resulting in $g(1.3866127) \sim 0.3561$. It is an interesting
question whether this approach can be refined by taking even higher order polynomials. Given how close this domain is to
our construction, it seems likely that the gain would be quite small.\\

\textbf{Acknowledgment.} We are grateful to Ren\'e Sperb for interesting discussions and grateful to Guido Sweers for allowing us to reproduce his construction.

\end{document}